\theoremstyle{plain}
\newtheorem{theorem}{Theorem}[section]
\newtheorem{lemma}{Lemma}[section]
\newtheorem{proposition}{Proposition}[section]
\newtheorem{fact}{Fact}[section]
\newtheorem*{discussion*}{Discussion}
\theoremstyle{definition}
\newtheorem{definition}{Definition}[section]
\newtheorem{remark}{Remark}[section]
\newcommand{\cond}{\textbf{C}\!\!}
\newtheorem{assumption}{Assumption}
\newtheorem*{remark*}{Remark}
\newtheorem{corollary}{Corollary}[section]
\numberwithin{equation}{section}
\newcommand{\nset}{\mathbb{N}}
\newcommand{\rset}{\mathbb{R}}
\newcommand{\ind}{\mathds{1}}
\newcommand{\abs}[1]{\lvert{#1}\rvert}
\newcommand{\ffrac}[2]{\ensuremath{\frac{\displaystyle #1}{\displaystyle #2}}}
\newcommand{\un}[1]{\ind{\left\{#1\right\}}}
\newcommand{\indic}{\un} 
\newcommand{\PP}[1][]{\ifthenelse{\equal{#1}{}}{\ensuremath{\mathbb{P}}}{\ensuremath{\mathbb{P}\left( #1 \right) }}}
\newcommand{\EE}[1][]{\ifthenelse{\equal{#1}{}}{\ensuremath{\mathbb E}}{\ensuremath{{\mathbb E}\left[ #1 \right]}}}
\newcommand{\Var}[1][]{\ifthenelse{\equal{#1}{}}{\ensuremath{\matrhm{Var}}}{\ensuremath{{\mathrm{Var}}\left[ #1 \right]}}}
\newcommand{\Cov}[1][]{\ifthenelse{\equal{#1}{}}{\ensuremath{\mathrm{Cov}}}{\ensuremath{{\mathrm{Cov}}\left[ #1 \right]}}}
\newcommand{\rCV}{\widehat{\mathcal{R}}_{CV}}
\newcommand{\rCVC}{\widehat{\mathcal{R}}^{corr}_{CV}}
\DeclareMathOperator*{\argmin}{arg\,min}
\newcommand\ie{\emph{i.e.}\xspace}
\newtheorem{prop}{Proposition}[section]
\newcommand{\CV}{\textsc{CV}\xspace}
\newcommand{\loo}{\emph{l.o.o.}\xspace}
\newcommand\eg{\emph{e.g. }\xspace}
\newcommand{\TR}{\mathcal{R}}
\newcommand{\ER}[1][]{\widehat{ \mathcal{R}}}
\newcommand{\risk}{\mathcal{R}}
\newcommand{\data}{\mathcal{D}}
\newcommand{\DD}{\data}
\newcommand{\BiasCV}{\mathrm{Bias}}
\newcommand{\DevCV}{D_{CV}}
\newcommand{\DevT}{D_{\alg[T_{1:K}]}}
\newcommand{\DevAll}{D_{\alg[n]}}
\newcommand{\Kfold}{\textrm{K-fold}}
\newcommand{\alg}{\mathcal{A}}
\newcommand{\algdd}{\mathcal{A}_{\DD}}
\newcommand{\loss}{\ell}
\DeclareMathOperator{\sign}{sign}
\date{November 2021}
\begin{document}
\twocolumn[

\aistatstitle{On the bias of K-fold cross validation with stable learners}

\aistatsauthor{ Anass Aghbalou \And François Portier  \And Anne Sabourin   }

\aistatsaddress{ LTCI, Télécom Paris\\ 
	Université Paris-Saclay \And   CREST\\Ensai \And CNRS, MAP5\\Université Paris Cité } ] 

\begin{abstract}
	This paper investigates the efficiency of the K-fold cross-validation (CV) procedure and a debiased version thereof as a means of estimating the generalization risk of a learning algorithm. We work  under the  general assumption of uniform algorithmic stability. We show that the K-fold risk estimate may not be consistent under such general stability assumptions,  by constructing non vanishing lower bounds on the error in realistic contexts such as regularized empirical risk minimisation and stochastic gradient descent. We thus advocate the use of a debiased version of the K-fold and prove an error bound with  exponential tail decay regarding this version. Our result is applicable to the large class of uniformly stable algorithms, contrarily to earlier works focusing on specific tasks such as density estimation. We illustrate the relevance of the debiased K-fold CV on a simple model selection problem and demonstrate empirically the usefulness of the promoted approach on real world classification and regression datasets.


\end{abstract}

\section{INTRODUCTION}
Introduced in \cite{stone74},  cross-validation (\CV) is a popular tool in statistics for estimating the generalization risk of a learning algorithm. It is also the mainstream approach for model and  parameter selection. Despite its widespread use, it has been shown in several contexts that \CV schemes fail to select the correct model unless the test fraction is negligible in front of the sample size. Unfortunately, this excludes the widely used  \Kfold\:\CV. This suboptimality has been pinned in the linear regression framework by \cite{BURMAN89,shao97,Yuhong2007}, then in other  specific frameworks such as  density estimation \citep{arlot2008} and classification \citep{yang2006}. 
The theoretical properties of \CV\ procedures for model selection in wider settings are
 notoriously difficult to establish and remain the subject of active research \citep{bayle2020cross,wager2020cross}. 

To tackle the suboptimality of \Kfold,
\cite{BURMAN89,fushiki2011estimation} have proposed to add some
debiasing  correction terms to the \Kfold\xspace\CV estimate in order  to
improve the convergence rate. However, the analysis conducted in these
works is purely asymptotic and focuses only on ordinary linear
regression. More recently \cite{arlot2016VFchoice} conduct a non
asymptotic study for the bias corrected \Kfold\xspace in the density
estimation framework and show the benefits of such a
correction. Nonetheless, the latter study relies on closed-form
formulas for risk estimates which are valid only for  histogram rules.
To summarize, the statistical consistency of the debiased version has been established only in  specific frameworks pertaining to classical statistics. In addition, to our best knowledge, the consistency of \Kfold\xspace without correction has not been proved nor disproved in the existing literature. 

The main purpose of this paper is to establish non-asymptotic results (upper and lower bounds) regarding the error of the \Kfold\xspace risk estimate, under realistic  assumptions which are valid for a
wide class of modern algorithms (regularized empirical risk
minimization, neural networks, bagging, SGD, \emph{etc}\dots), namely an \emph{algorithmic stability}  assumption discussed below. In other words, the question we seek to answer is as follows: 
\begin{itemize}
	\item Is \Kfold\xspace cross-validation consistent  under  algorithmic stability assumptions? If not, how about the bias corrected \Kfold?	
\end{itemize}

The notion of \emph{algorithmic stability} and its consequences in learning theory has received much attention since its introduction in \cite{DEvroy-79}. This property allows to obtain generalization bounds for a large class of a learning algorithms such as  k-nearest-neighbors \citep{DEvroy-79}, empirical risk minimizers \citep{kearns1999algorithmic}, regularization networks \citep{BousquetElisseeff-2000}, bagging \citep{elisseeff05a} to name but a few. For an exhaustive review of the different notions of \emph{stability} and their consequences on the generalization risk of a learning algorithms, the reader is referred to \cite{kutin2002}. Our  working assumption  in this paper is  \emph{uniform stability}, which   encompasses many algorithms such as  Support Vector Machine  \citep{bousquet2002stability}, regularized empirical risk minimization \citep{Zhang2004,wibisono2009}, stochastic gradient descent \citep{hardt16} and   neural networks with a simple architecture \citep{charles18a}.

\paragraph{\bf Related Work on \Kfold\xspace\CV\ with Stable Learners.}
In \cite{kale2011cross,kumar2013near}, \Kfold\ \CV\ for risk estimation  is envisioned under stability assumptions regarding the algorithm. It is shown that the \Kfold\xspace risk estimate has a much smaller variance than the simple hold-out estimate and the amount of variance reduction is quantified.  Another related work is \cite{abou2017} who builds upon a variant of algorithmic stability, namely $L^q$-\emph{stability} to derive PAC upper bounds for \Kfold\ \CV error estimates. Other  results regarding the  asymptotic behavior  of  \Kfold\ estimates can be found in \cite{austern2020,bayle2020cross}.

However, none of the results mentioned above imply a universal upper
bound regarding the \Kfold\ neither for risk estimation nor for model
selection. Indeed their focus is on the the \emph{variance term} of
the \Kfold\ error, while they do not take into account the high \emph{bias}
generally  induced by this \CV\ scheme (see
\cite{shao97,arlot2016VFchoice} for instance).
To our best knowledge, the literature on algorithmic
stability is silent about the consistency of \Kfold\:CV\,-- the most
widely used \CV\ scheme -- in a generic stability setting. Filling
this gap is the main purpose of the present paper.

\paragraph{Contributions and Outline.}
We introduce the necessary background and notations about \CV\ risk estimation and algorithmic stability in Section~\ref{sec:backround}.
Section~\ref{sec:CV-UB} is intended to give some context about provable guarantees regarding \Kfold\ CV\ scheme, namely 
we  state and prove a generic upper bound on the error of the generalization risk estimate for uniformly stable algorithms. However, with realistic stability constants,  the obtained upper bound is not satisfactory, in so far as  it does not vanish as the sample size $n\rightarrow \infty$.  


Our main  contributions are gathered in sections~\ref{sec:KF-LB} to \ref{sec:model-selec} and  may be summarized as follows:
\begin{enumerate}
\item One may wonder whether the looseness of the bound for the  \Kfold\ \CV error  is just an artifact from our proof. 
  We answer in the negative by deriving a lower bound on the \Kfold\ error \:(Section~\ref{sec:KF-LB}) in two different contexts, specifically, regularized empirical risk minimization and  stochastic gradient optimization. The latter bound shows that under the uniform stability assumption alone, \Kfold\ \CV\ is inefficient in so far as it can fail in estimating the generalization risk of a uniformly stable algorithm.
\item 
  We analyze  a corrected \Kfold\ procedure and prove a  PAC generalization upper bound covering the general case of uniformly stable learners. As a consequence, the corrected version of the \Kfold\ is shown to be  efficient in contrast to the standard version. The   corrected \Kfold\ scheme  has been investigated in~\cite{BURMAN89,burman1990estimation,fushiki2011estimation,arlot2016VFchoice} in the particular frameworks of  ordinary linear regression and density estimation. Furthermore, the analysis in the latter references relies on strong regularity assumptions (further details are given in Section \ref{sec:BKF-UB}) which  aren't satisfied by many modern learning rules like Support Vector Machine (SVM), stochastic gradient descent methods, bagging, etc.
  Instead our upper bound covers the general case of uniformly stable learners. 
  As an example of application, we show that the debiased \Kfold\:permits to select a model within a finite collection in a  risk consistent manner (Section \ref{sec:model-selec}). In other words, the excess risk of the selected model tends to $0$ as $n\rightarrow \infty$. Finally we demonstrate empirically the added value of the debiased \Kfold\:compared with the standard one in terms of the test error of the selected model.
        \end{enumerate}



\section{BACKGROUND, NOTATIONS AND WORKING ASSUMPTIONS}
\label{sec:backround}
\subsection{Notations}
We place ourselves in the following general learning setting.  One  receives a collection of independent and identically distributed random vectors $\DD=(O_1,\ldots,  O_n)$ 
lying in a sample space $\mathcal Z$, with common distribution $P$. For any $n\in \nset$, let $[n]$ denote the set of integers $\{1,2,\dots,n\}$. Consider a class of predictors $\mathcal{G}$ and a loss function  
$\loss: \mathcal G \times \mathcal Z \to \rset$, so that $\loss(g,O)$
be the error of $g$ on the observation $O\in \mathcal Z$. As an example, in
the supervised learning setting
$\mathcal{Z} = \mathcal{X}\times \mathcal{Y}$,  $g$ is a  mapping
$\mathcal{X}\to \mathcal{Y}$ and for $o = (x,y)$ the loss function writes as 
$\ell(g,o) = \ell(g(x), y)$. However our results are not limited to
the supervised setting.  Given a subsample
$\DD_T =\{O_i \mid i \in T \}$ indexed by $T\subset [n]$ and an
algorithm (or learning rule) $\alg$, we denote by
$\alg(T) \in \mathcal G$ the predictor obtained by training $\alg$ on
$\DD_T$. 
   We consider in this paper deterministic algorithms, that is,  given a subsample $\DD_T$, the output of the algorithm $\alg(T)$ is non random. 
  We thereby neglect the randomness brought \emph{e.g.} by optimization routines. The case of random algorithms can be covered  at the price of additional notational burden. For the sake of readability we restrict ourselves to  deterministic algorithms  in the main paper and show how to relax it in the supplementary material (Section~\ref{sec:randomized}), in order to cover the case of random algorithms such as stochastic gradient descent (\textrm{SGD}) or neural networks. 
 This extension is in particular necessary to one of our counter-examples (Section~\ref{theo:K-fold-LB-SGD}) where we build a lower bound for the \textrm{SGD} algorithm.

The generalization risk of the predictor $\alg(T)$ is then
\[
\TR\big[\alg(T)\big]=\EE\left[ \loss\left(\alg\left(T\right),O\right)\mid \DD_T \right],
\]
where $O$ is independent from $\DD_T$. Notice that the randomness in the latter  expectation stems from  the novel observation $O$ only while the trained algorithm $\alg(T)$ is fixed. 
The quantity of interest here is the generalization risk of the learning rule trained on the full dataset, $\TR\big[\alg([n])\big]$. 
The hold-out estimate of the latter involves  a validation index set  $V$ disjoint from $T$ and writes as 
\[\ER\big[\alg(T), V\big]=\frac{1}{ n_V} \sum_{i\in V} \loss(\alg(T),O_i),
\]
where 
$n_V=card(V)$. \\
Given a family of validation sets in  $[n]$,  $V_{1:K} = (V_j)_{j=1,\ldots, K}$, 
the \Kfold\xspace\CV  estimator of the generalization risk of $\alg([n])$ is
\begin{equation}\label{def:risk-CV}
\rCV\left[\alg,V_{1 : K}\right]=
\frac{1}{ K} \sum_{j= 1}^{K}\ER\big[\alg({T_j}) , V_j\big],
\end{equation}
where $T_j = [n] \backslash V_j$. 
For clarity reasons, we suppose further that $n$ is divisible by $K$ so that $n/K$ is an integer. This condition guarantees, that all validation sets have the same cardinal $n_{V}=n/K$.

\subsection{Algorithmic Stability}\label{sec:algoStability}
An algorithm $\alg$ is called stable if removing a training point $O_i$ from $\DD_T$ ($i\in T$) or replacing $O_i$ with an independent observation $O'$ drawn from the same distribution does not change much the risk of the output. Formally, for $i\in T \subset [n]$ as above, let $T^{\setminus i }= T\setminus \{i\}$, so that $\alg(T^{\setminus i})$ is the output of $\alg$ trained on $\DD_T\setminus\{O_i\}$. 
Denote similarly  $\alg(T^i)$ the output of $\alg$ trained on $\DD_T\setminus\{O_i\} \cup\{O'\}$. 
The notion of \emph{hypothesis stability} was first introduced in \cite{DEvroy-79} to derive non asymptotic guarantees for the leave-one-out \CV (\loo). In this paper, we consider instead   \emph{uniform stability}, an assumption used in \cite{bousquet2002stability,wibisono2009,hardt16,feldman2019high} to derive probability upper bounds for the training error and \loo estimates. With the above notations, uniform stability is defined as follows.

\begin{definition}\label{def:unif-stable}
	An algorithm $\alg$ is said to be   $(\beta_t)_{1\leq t\leq n}$ uniformly stable with respect to a loss function $\loss$ if, for any $T\subset [n]$, $i\in T$ it holds  that  
	\begin{equation}
	\left|\loss\left(\alg(T),O\right)-
	\loss\left(\alg(T^{\backslash i}),O\right)\right| \leq \beta_{n_T}, 
	\end{equation}
	with $P$-probability one.
      \end{definition}

        Many widely used Machine Learning algorithms are uniformly stable in the sense of Definition~\ref{def:unif-stable}. In particular $\beta_n\leq \ffrac{C}{n}$  for SVM and least square regression with the usual mean squared error, and for SVM classification with the soft margin loss \citep{bousquet2002stability}.  Up to a minor definition of uniform stability accounting for randomness (see Section~\ref{sec:randomized} in the supplement), many extensively used stochastic gradient methods are also uniformly stable, such as \emph{e.g.} SGD  with convex and non convex losses \citep{hardt16} or  RGD (randomized coordinate descent) and  SVRG (stochastic variance reduced gradient method) with loss functions verifying the Polyak-Łojasiewicz condition \citep{charles18a}.

The following simple fact concerns the effect of removing $n'$ training points on  uniformly stable algorithms. 
\begin{fact}
	\label{fact:Bias-UB}
	Let $\alg$ be a decision rule which is $(\beta_t)_{1\leq t  \leq n}$ uniformly stable, additionally suppose that the sequence $(\beta_t)_{1\leq t  \leq n}$ is decreasing, then for any $T\subset [n]$,  
        one has, 
	\[\left|\loss\left(\alg\left(\left[n\right]\right),O\right)-
		\loss\big(\alg(T),O\big)\right| \leq \sum_{i=n_T+1}^{n}\beta_i. 
	\]
\end{fact}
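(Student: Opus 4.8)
The plan is to prove the inequality by a \emph{telescoping} (equivalently, inductive) argument: interpolate between $\alg([n])$ and $\alg(T)$ through a decreasing chain of index sets obtained by deleting one training point at a time, and invoke Definition~\ref{def:unif-stable} at each step of the chain.

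Concretely, set $m = n - n_T$ and enumerate the elements of $[n]\setminus T$ as $i_1,\dots,i_m$. Define $S_0 = [n]$ and $S_k = S_{k-1}\setminus\{i_k\}$ for $k=1,\dots,m$, so that $S_m = T$ and $\mathrm{card}(S_{k-1}) = n-k+1$ for every $k$. For each $k$, the pair $(S_{k-1},S_k)$ is exactly of the form $(S,S^{\setminus i})$ considered in Definition~\ref{def:unif-stable}, with $S=S_{k-1}$ and $i=i_k\in S_{k-1}$; hence, with $P$-probability one,
\[
\bigl|\loss(\alg(S_{k-1}),O) - \loss(\alg(S_k),O)\bigr| \;\leq\; \beta_{\,n-k+1}.
\]
Since this is a finite family of almost-sure statements, they hold simultaneously on a single event of probability one.

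On that event, the triangle inequality yields
\[
\bigl|\loss(\alg([n]),O) - \loss(\alg(T),O)\bigr| \;\leq\; \sum_{k=1}^{m}\bigl|\loss(\alg(S_{k-1}),O) - \loss(\alg(S_k),O)\bigr|,
\]
and combining with the per-step bound gives
\[
\sum_{k=1}^{m}\bigl|\loss(\alg(S_{k-1}),O) - \loss(\alg(S_k),O)\bigr| \;\leq\; \sum_{k=1}^{m}\beta_{\,n-k+1} \;=\; \sum_{i=n_T+1}^{n}\beta_i,
\]
where the last equality is just the reindexing $i = n-k+1$ (so $k=1$ gives $i=n$ and $k=m$ gives $i=n_T+1$, and $i$ runs over all integers in $\{n_T+1,\dots,n\}$). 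When $T=[n]$ the sum is empty and both sides vanish, so the statement is trivial in that case.

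There is no genuine obstacle here; the only points that deserve a little care are the bookkeeping of cardinalities — tracking that $\mathrm{card}(S_{k-1}) = n-k+1$ so that the correct constant $\beta_{n-k+1}$ is used at each step — and the elementary measure-theoretic remark that finitely many $P$-a.s.\ inequalities may be intersected. I note in passing that the monotonicity of $(\beta_t)_t$ is not actually required for this precise bound (each step invokes the exact constant $\beta_{n-k+1}$); it is assumed only because it is convenient later on, e.g.\ to further upper bound $\sum_{i=n_T+1}^{n}\beta_i \leq (n-n_T)\,\beta_{n_T+1}$.
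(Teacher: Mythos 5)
Your proof is correct: the paper states Fact~\ref{fact:Bias-UB} without proof, and your telescoping chain $[n]=S_0\supset S_1\supset\cdots\supset S_m=T$ with the per-step bound $\beta_{n-k+1}$ from Definition~\ref{def:unif-stable} is exactly the standard argument the authors leave implicit, with the cardinality bookkeeping and reindexing done correctly. Your side remark is also accurate — monotonicity of $(\beta_t)$ is not needed for this bound since each step invokes the exact constant for the current set size; it only matters for subsequent crude majorizations of the sum.
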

\begin{remark}
	Definition \ref{def:unif-stable} and Fact \ref{fact:Bias-UB}  play a key role in our proofs. Namely we use Fact~\ref{fact:Bias-UB} to control the bias of the CV risk estimate and Definition~\ref{def:unif-stable} to derive a probability upper bound on its deviations via McDiarmid's inequality.
\end{remark}

We also rely on the fact that the  training and validation sets of \Kfold\ \CV verify a certain balance condition.
\begin{fact}\label{fact:mask-property}
	For the \Kfold\ \CV the validation sets $V_1,V_2,\dots V_K$  satisfies
	\begin{equation}\label{"card-condition"}
	card(V_j)=n_V\quad \forall j \in \llbracket1,K\rrbracket,
	\end{equation}
	for some $n_V \in \llbracket1,n\rrbracket$.\,Moreover it holds that 
	\begin{equation}\label{"key-condition"}
	\frac{1}{K}\sum_{j=1}^{K} {\indic{l \in V_j} }=\frac{n_V}{n} \quad \forall l \in [n]. 
	\end{equation}
\end{fact}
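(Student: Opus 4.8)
The plan is simply to unwind the definition of the \Kfold\ scheme. By construction, the validation sets $V_1,\dots,V_K$ are obtained by partitioning the index set $[n]$ into $K$ disjoint blocks (the folds); since we have assumed $n$ to be divisible by $K$, each block has the same cardinality $n_V = n/K$, which is exactly the first assertion. So the only thing to argue is the averaging identity for the second assertion.

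For that, I would fix an arbitrary index $l\in[n]$. Because $(V_j)_{j=1}^{K}$ is a partition of $[n]$, the index $l$ lies in exactly one of the sets $V_j$, hence $\sum_{j=1}^{K}\indic{l\in V_j}=1$. Dividing both sides by $K$ and then substituting $1/K = n_V/n$ (which holds since $n_V = n/K$) yields $\frac{1}{K}\sum_{j=1}^{K}\indic{l\in V_j} = \frac{n_V}{n}$, as claimed. Since $l$ was arbitrary, this holds for all $l\in[n]$.

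There is essentially no obstacle here: the statement is a purely combinatorial bookkeeping fact about partitions, and the ``hard part'', if one wants to call it that, is merely recognizing that the balance condition is nothing more than the statement that each data point is used for validation in exactly one fold. The reason it is isolated as a separate fact is that this averaging identity is precisely what will later allow the \Kfold\ bias to be rewritten as an average of per-point deviations and controlled via Fact~\ref{fact:Bias-UB}.
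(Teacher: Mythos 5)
Your argument is correct and matches the paper's own proof (Lemma~\ref{lemma:CV-scheme-property} in the supplement): both rest on the observation that the folds partition $[n]$, so each index lies in exactly one validation set, giving $\sum_{j=1}^{K}\indic{l\in V_j}=1$, combined with $n=Kn_V$. The paper's lemma additionally records the companion identity for the training sets via $\sum_{j}\indic{l\in T_j}=K-1$, but for the statement as posed your proof is complete and takes the same route.
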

Because $T_j=[n]\backslash V_j$,  if  \eqref{"key-condition"} holds, then the training sets $T_j$ verify a similar equation, that is, 
$$	\frac{1}{K}\sum_{j=1}^{K}  {\indic{l \in T_j} }=\frac{n_{T}}{n} \quad \forall l \in [n].$$
We prove Fact~\ref{fact:mask-property} in the supplement (Lemma~\ref{lemma:CV-scheme-property}). 


Throughout this paper we work under the following uniform stability assumption combined with a boundedness assumption regarding the cost function.
\begin{assumption}[Stable algorithm]
	\label{assum:stability-setting}
	The algorithm $\alg$ is $(\beta_t)_{1\leq t\leq n}$ uniformly stable with respect to a cost function $\loss$ that satisfies 
        $$   \forall O \in \mathcal{Z} \:,\:
        \forall T \subset [n] ,\: \left|\loss\left(\alg\left(T\right), O \right)\right| \leq L.$$
	
      \end{assumption}
      


\section{UPPER BOUNDS FOR \textrm{K-FOLD} RISK ESTIMATION}
\label{sec:CV-UB}
Our first  result Theorem~\ref{theo:CV-stable-bound} is a generic upper bound on the error of the generalization risk estimate for stable algorithms satisfying Assumption~\ref{assum:stability-setting}. 
 Our upper bound is of the same order of
magnitude as existing results in the literature which apply to other contexts, 
\emph{e.g.} in \cite{bousquet2002stability} for \loo \CV or
in \cite{cornec17} for the specific case empirical risk minimizers, although our techniques of proof are different.
The fact that the upper bound does not  vanish with large sample sizes $n$ with realistic stability constants (Corollary~\ref{coro:cv-stable-UB}) gives some context and motivates the rest of this work. 
 


 \begin{theorem}\label{theo:CV-stable-bound}
 Consider a stable learning algorithm $\alg$ satisfying  Assumption~\ref{assum:stability-setting}. Then, we have with probability $1-2\delta$,
 \begin{align*}
 	\bigg|\rCV\left[\alg,V_{1:K}\right]-\TR\big[&\alg([n])\big]\bigg|\leq \sum_{i=n_T+1}^{n}\beta_i\\&+(4\beta_{n_T}n_T+2L)\sqrt{\ffrac{\log(1/\delta)}{2n}}.
 \end{align*}

Where $L$ is the upper bound on the cost function $\loss$ from Assumption~\ref{assum:stability-setting}.	
\end{theorem}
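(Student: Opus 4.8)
The plan is to split the error into a centered \emph{deviation} term and a \emph{bias} term, to control the bias deterministically via Fact~\ref{fact:Bias-UB}, and to control the deviation with McDiarmid's inequality after computing its bounded-difference coefficients. Writing $T_j=[n]\setminus V_j$, decompose
\[
\rCV\left[\alg,V_{1:K}\right]-\TR\big[\alg([n])\big] = D + B ,
\]
where $D := \frac{1}{K}\sum_{j=1}^K\big(\ER\big[\alg(T_j),V_j\big]-\TR\big[\alg(T_j)\big]\big)$ and $B := \frac{1}{K}\sum_{j=1}^K\big(\TR\big[\alg(T_j)\big]-\TR\big[\alg([n])\big]\big)$. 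Since $\alg(T_j)$ and $\alg([n])$ are deterministic functions of $\DD$, we have $\TR\big[\alg(T_j)\big]-\TR\big[\alg([n])\big]=\EE\left[\loss(\alg(T_j),O)-\loss(\alg([n]),O)\mid\DD\right]$ with $O\indep\DD$; since $|T_j|=n_T$, Fact~\ref{fact:Bias-UB} bounds the integrand a.s.\ by $\sum_{i=n_T+1}^n\beta_i$, hence $|B|\le\sum_{i=n_T+1}^n\beta_i$ almost surely. This term thus requires no probabilistic argument.

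It remains to control $D$, viewed as a function $D(O_1,\dots,O_n)$ of $n$ independent variables. First, $\EE[D]=0$: for each $j$ the set $V_j$ is disjoint from $T_j$, the sample is i.i.d., and $\alg(T_j)$ is a function of $\DD_{T_j}$ only, so conditioning on $\DD_{T_j}$ gives $\EE\big[\ER\big[\alg(T_j),V_j\big]\big]=\EE\big[\TR\big[\alg(T_j)\big]\big]$. Next we bound the bounded-difference coefficients of $D$. Fix $l\in[n]$ and replace $O_l$ by an independent copy $O_l'$; by Fact~\ref{fact:mask-property} and $n_V=n/K$, the index $l$ lies in exactly one validation set, say $V_{j_0}$, and in the remaining $K-1$ training sets. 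For $j\ne j_0$ one has $l\in T_j$, and applying Definition~\ref{def:unif-stable} twice together with the triangle inequality yields the replace-one bound $\big|\loss(\alg(T_j),o)-\loss(\alg(T_j^{l}),o)\big|\le 2\beta_{n_T}$ for every $o\in\mathcal Z$; hence both $\ER\big[\alg(T_j),V_j\big]$ and $\TR\big[\alg(T_j)\big]$ change by at most $2\beta_{n_T}$, so the $j$-th summand of $K D$ changes by at most $4\beta_{n_T}$. For $j=j_0$ one has $l\in V_{j_0}$, so $\alg(T_{j_0})$ is unchanged and only the single term indexed by $l$ in $\ER\big[\alg(T_{j_0}),V_{j_0}\big]$ is affected, by at most $2L/n_V$ by the boundedness part of Assumption~\ref{assum:stability-setting}. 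Summing over $j$, dividing by $K$, and using $n_T/n=(K-1)/K$ and $n_V=n/K$, the bounded-difference coefficient of $D$ at coordinate $l$ is at most $c:=(4\beta_{n_T}n_T+2L)/n$, uniformly in $l$.

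McDiarmid's inequality then yields $\PP\big(|D|\ge t\big)\le 2\exp\big(-2t^2/(n c^2)\big)$ for every $t>0$; equating the right-hand side to $2\delta$ gives $|D|\le(4\beta_{n_T}n_T+2L)\sqrt{\log(1/\delta)/(2n)}$ with probability at least $1-2\delta$. Combined with the almost-sure bound on $|B|$ via the triangle inequality, this is exactly the claimed inequality. I expect the main obstacle to be the coefficient computation of the middle paragraph: one must take care to use the \emph{replace-one} stability constant $2\beta_{n_T}$ rather than $\beta_{n_T}$, to bound separately the perturbation of the empirical average and of the population risk inside each fold, and to aggregate the per-fold contributions correctly through the balance identity of Fact~\ref{fact:mask-property} (each index sitting in exactly one validation fold); the remaining steps are routine.
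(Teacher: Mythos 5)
Your proof is correct and follows essentially the same route as the paper: the same deviation-plus-bias decomposition, the bias controlled almost surely via Fact~\ref{fact:Bias-UB}, and the deviation term handled by McDiarmid's inequality with per-coordinate coefficient $(4\beta_{n_T}n_T+2L)/n$, obtained exactly as in Lemma~\ref{lemma:Dev-CV-UB} by distinguishing the fold whose validation set contains the replaced index from the $K-1$ folds whose training sets contain it, and aggregating through the balance identity of Fact~\ref{fact:mask-property}. Your coefficient computation, including the replace-one constant $2\beta_{n_T}$ applied to both the empirical and population risks within each fold, matches the paper's argument step for step.
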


\begin{proof}[Sketch of proof]
	Define the average risk of the family $\big(\alg(\DD_{T_j})\big)_{1\leq j \leq K}$
	\begin{equation}\label{eq:true-rcv-def}
		\risk_{CV}\left[\alg,V_{1:K}\right]= \frac{1}{K}\sum_{j=1}^{K}\risk\big[\alg(T_j)\big],
	\end{equation}
	then write the following decomposition 
	\begin{equation}\label{ineq:error-decomp}
		\rCV\left[\alg,V_{1:K}\right]-\TR\big[\alg([n])\big] = \DevCV+\BiasCV,
	\end{equation}
	with \begin{align}
	&\DevCV= \rCV\left[\alg,V_{1:K}\right]-  \risk_{CV}\left[\alg,V_{1:K}\right]\label{def:dev_CV},  \\
	&\BiasCV= \mathcal{R}_{CV}\left[\alg,V_{1:K}\right]-\TR\big[\alg([n])\big]\label{def:bias_CV}.
	\end{align}
	The proof consists in bounding each term of the above decomposition independently. The term $\DevCV$ measure the deviations of $\rCV$ from its mean and it can be controlled using McDiarmid's inequality (Proposition \ref{prop:Mcdiarmids-ineq}). The second term $\BiasCV$ is  controlled using Fact~\ref{fact:Bias-UB}. The detailed proof is deferred to the appendix.
\end{proof}
As discussed in the background section~\ref{sec:algoStability}, typical uniform stability constants $\beta_n$ for standard  algorithms satisfy  $\beta_n\leq \frac{C}{n}$. In this case, Theorem~\ref{theo:CV-stable-bound} yields the following corollary. 

\begin{corollary}\label{coro:cv-stable-UB}
  Consider a stable learning algorithm $\alg$ satisfying  Assumption~\ref{assum:stability-setting}  with stability parameter $\beta_n\leq \frac{C}{n} $. Then, we have with probability $1-2\delta$,
  \begin{align*}         
    \bigg|\rCV\left[\alg,V_{1:K}\right]-\TR\big[
    &\alg([n])\big]\bigg|\leq C \log\left(\ffrac{K}{K-1}\right) \\  
    &+(4C+2L)\sqrt{\ffrac{\log(1/\delta)}{2n}},
  \end{align*}

  wich does not converge  to $0$ as $n\to \infty$.
\end{corollary}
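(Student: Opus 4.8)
The plan is simply to specialize Theorem~\ref{theo:CV-stable-bound} to the regime $\beta_t \le C/t$ while tracking the explicit dependence on $n$ and $K$. Recall that for the \Kfold\ scheme every validation set has cardinality $n_V = n/K$ (an integer, since $n$ is assumed divisible by $K$), so that all training sets have size $n_T = n - n/K = n(K-1)/K$.

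First I would bound the bias contribution $\sum_{i=n_T+1}^{n}\beta_i$. Since $\beta_i \le C/i$ and $t\mapsto 1/t$ is decreasing, a comparison with the integral gives
$$\sum_{i=n_T+1}^{n}\beta_i \;\le\; C\sum_{i=n_T+1}^{n}\frac1i \;\le\; C\int_{n_T}^{n}\frac{\ud t}{t} \;=\; C\log\!\left(\ffrac{n}{n_T}\right) \;=\; C\log\!\left(\ffrac{K}{K-1}\right),$$
the last equality being $n/n_T = K/(K-1)$. Next I would bound the multiplicative constant in the deviation term: from $\beta_{n_T}\le C/n_T$ we get $4\beta_{n_T}n_T + 2L \le 4C + 2L$. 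Substituting both bounds into the conclusion of Theorem~\ref{theo:CV-stable-bound} yields, with probability at least $1-2\delta$,
$$\left|\rCV\left[\alg,V_{1:K}\right]-\TR\big[\alg([n])\big]\right| \;\le\; C\log\!\left(\ffrac{K}{K-1}\right) + (4C+2L)\sqrt{\ffrac{\log(1/\delta)}{2n}},$$
which is exactly the claimed inequality.

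Finally, to justify the last assertion, I would note that for a fixed number of folds $K\ge 2$ the quantity $C\log\big(K/(K-1)\big)$ is a strictly positive constant independent of $n$, while $(4C+2L)\sqrt{\log(1/\delta)/(2n)}\to 0$ as $n\to\infty$; hence the right-hand side converges to $C\log\big(K/(K-1)\big) > 0$ and does not vanish. There is essentially no obstacle in this argument: the only point demanding a little care is the direction of the integral comparison for the harmonic-type sum, namely that one must use $\sum_{i=n_T+1}^{n} 1/i \le \int_{n_T}^{n}\ud t/t$ (and not the reverse bound $\int_{n_T+1}^{n+1}$), so that the summation index range matches the stated logarithmic factor.
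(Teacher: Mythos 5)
Your proposal is correct and follows the paper's own argument exactly: bound $\sum_{i=n_T+1}^{n}\beta_i \le C\sum_{i=n_T+1}^{n}1/i \le C\log(n/n_T) = C\log(K/(K-1))$, note $4\beta_{n_T}n_T+2L\le 4C+2L$, and plug into Theorem~\ref{theo:CV-stable-bound}. Your additional care about the direction of the integral comparison and the non-vanishing of the constant term is sound but does not change the route.
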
 

\begin{proof}
Recall  that $\ffrac{n}{n_T}=\frac{K}{K-1}$ and write 
$$\sum_{i=n_T+1}^{n}\beta_i \le
C\sum_{i=n_T+1}^{n}\frac{1}{i}\leq
C \log\left(\frac{K}{K-1}\right). $$
       The result follows from
Theorem~\ref{theo:CV-stable-bound}.
\end{proof}

\begin{remark}[Bias and Variance of \Kfold\ \CV]\label{remark:bias-variance-stability}
The two terms  of the sum in  the upper bound of  Theorem~\ref{theo:CV-stable-bound} correspond respectively to a  variance  and a bias  term, namely~$\DevCV$ and ~$\BiasCV$ in the error decomposition~\eqref{ineq:error-decomp}.\\
On the one hand, the term $(4\beta_{n_T}n_T+2L)\sqrt{\ffrac{\log(1/\delta)}{2n}}$ reflects the variance of the \CV\ procedure. When $\beta_n\leq \frac{C}{n}$ it yields the usual 
rate $1/\sqrt{n}$.  
On the other hand, the term $\sum_{i=n_T+1}^{n}\beta_i$ reflects the
bias of \Kfold\ \CV. Contrarily to the variance term, it does not
vanish as $n\to \infty$, even when $\beta_n\le C/n$. Finally, Notice that, as the number of folds $K$ increases,  the training size $n_T$ gets closer to  the sample size $n$  and the bias of \Kfold\,vanishes. However, for computational efficiency, increasing the number of folds is not always desirable. 

\end{remark}

One may wonder whether the looseness of the bias term $\sum_{i=n_T+1}^{n}\beta_i$  is just an artifact from our proof. In the next theorem we answer in the negative by deriving a lower bound for the \Kfold. The latter bound shows that under the uniform stability assumption alone, \Kfold\ \CV\ is inefficient in so far as it can fail in estimating the generalization risk of a uniformly stable algorithm.

\section{LOWER BOUND FOR THE \textrm{K-FOLD}\ ERROR UNDER ALGORITHMIC STABILITY}
\label{sec:KF-LB}
To construct lower bounds on \Kfold\ \CV error, we consider two families of algorithms that   satisfy  the  uniform stability  hypothesis with parameter $\beta_n$ scaling as $1/n$. Namely, regularized empirical risk minimizers (\textrm{RERM}) and stochastic gradient descent (\textrm{SGD}).
\subsection{Regularized Empirical Risk Minimization}
The  first counter-example that we build to prove a lower bound is formulated in a regression framework. In particular, we consider $L^2$-regularized empirical risk minimization ($L^2$-\textrm{RERM}) 
\begin{equation}\label{def:RERM}
	\alg= \argmin_{g\in \mathcal{G}} \left\{\frac{1}{n}\sum_{i=1}^{n}\loss\left(g,O_i\right)+\lambda_n\left\| g\right\|^2_{\mathcal{G}}\right\}, 
\end{equation}
	
where $\loss$ is a convex loss function and $\mathcal{G}$ is a hypothesis space.\\  Under some mild assumptions on $\loss$ and the input space $\mathcal{Z}$, an  $L^2$-RERM  algorithm is uniformly stable with $\beta_n\leq \frac{C}{n}$   (see \eg\cite{Zhang2004,wibisono2009,liu2017} for further details).\\
The next result confirms that uniform stability alone is not sufficient to ensure the consistency of  \Kfold\ \CV for RERM.

\begin{theorem}[Non vanishing lower bound on the \Kfold\ error]\label{theo:K-fold-LB}  Consider the regression problem for a random pair
$O = (X,Y) \in \mathcal{Z} = \mathcal{X}\times\mathcal{Y}$,  
loss function $\loss(g , o)=(g(x) -y)^2$ and hypothesis class $\mathcal{G}$  consisting of linear predictors . Set $M\geq 1$ and $ n\in \llbracket 2,e^M \rrbracket$. Then, there exists an input space $\mathcal{Z}$ with distribution $P$ and a regularization parameter $\lambda_n$,  such that the \textrm{RERM} algorithm $\alg$ (Equation \ref{def:RERM})  satisfies Assumption~\ref{assum:stability-setting} with $L=M$ and  $\beta_n\leq 2 /n$.\\
	Furthermore the \Kfold\ \CV error satisfies,
	\begin{align*}
	\EE\left[\big|\rCV\left[\alg,V_{1:K}\right] - \right. & \left. \risk\left[\alg([n])\right]\big| \right] \geq \\ &2\log\left(\frac{K}{K-1}\right)\left(1-\ffrac{1}{M}\right),
	\end{align*}
	and 
	$$\EE\left[\big|\rCV\left[\alg,V_{1:K}\right] - \risk\left[\alg([n])\right]\big| \right] \leq  2\log\left(\frac{K}{K-1}\right).$$
	
\end{theorem}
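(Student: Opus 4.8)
The plan is to build, for each admissible triple $(M,n,K)$, an explicit worst-case instance of $L^2$-\textrm{RERM} in a one-dimensional linear regression model, for which the realized K-fold bias is as large as the stability bound $\sum_{i=n_T+1}^n\beta_i$ of Fact~\ref{fact:Bias-UB} permits. First I would collapse the problem to a degenerate design: take $\mathcal{X}$ and $\mathcal{Y}$ to be single points, so that $P$ is (essentially) a Dirac mass at some $z_0=(x_0,y_0)$, and let $\mathcal{G}$ be the one-parameter class $g_\theta(x)=\theta x$ with $\|g_\theta\|_{\mathcal{G}}=|\theta|$. Then for every subsample of size $m$ the \textrm{RERM} minimiser is explicit, $\theta_m=x_0y_0/(x_0^2+\lambda_m)$, and — because all training points coincide and all test points equal $z_0$ — there is no sampling randomness whatsoever. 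Consequently $\rCV\left[\alg,V_{1:K}\right]=\loss(\theta_{n_T},z_0)=:\phi(n_T)$ and $\risk[\alg([n])]=\loss(\theta_n,z_0)=\phi(n)$ are deterministic, so the quantity to control is simply $|\phi(n_T)-\phi(n)|$, where $\phi(m)=(\theta_mx_0-y_0)^2$.

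Next I would choose the scale $y_0$ and the schedule $(\lambda_m)_m$ so that the three requirements of the statement hold at once. Boundedness, $|\loss(\alg(T),O)|\le M$ for all $O$, reduces here to $\phi(m)\le M$ for $m\le n$, which fixes $y_0$ up to a constant (e.g.\ $y_0=\sqrt{M}$). Uniform stability reduces, in this degenerate instance, to the increments $|\phi(m)-\phi(m-1)|$; I would pick $(\lambda_m)$ so that $\phi$ stays essentially flat for $m\le n_T$ and then decreases by (just under) $2/m$ at each step for $n_T<m\le n$, with the increments monotone in $m$ so that $(\beta_m)$ can be taken decreasing as Fact~\ref{fact:Bias-UB} demands. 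Checking $\beta_m\le 2/m$ and $\phi(m)\in[0,M]$ is then a direct monotonicity calculation with the closed form of $\theta_m$, and by construction $|\phi(n_T)-\phi(n)|$ equals (essentially) $\sum_{i=n_T+1}^n 2/i$.

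Finally I would turn $\sum_{i=n_T+1}^n 2/i$ into the two stated bounds. The upper bound is immediate from Fact~\ref{fact:Bias-UB} and determinism: $|\phi(n_T)-\phi(n)|\le\sum_{i=n_T+1}^n\beta_i\le 2\sum_{i=n_T+1}^n\frac1i\le 2\int_{n_T}^n\frac{\ud x}{x}=2\log\frac{n}{n_T}=2\log\frac{K}{K-1}$. For the lower bound I would compare the same sum with the integral from below, $\sum_{i=n_T+1}^n\frac1i\ge\log\frac{n+1}{n_T+1}$, bound the remaining gap $\log\frac{n}{n_T}-\log\frac{n+1}{n_T+1}=\log(1+1/n_T)-\log(1+1/n)\le 1/n_T$, and use the relation $n\le e^M$ between $n$ and $M$ at exactly this point to absorb the slack into the factor $1/M$, yielding $\sum_{i=n_T+1}^n 2/i\ge 2\log\frac{K}{K-1}\bigl(1-1/M\bigr)$.

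The delicate point is the construction itself: one has to produce a single regularisation schedule $(\lambda_m)$ for which $\phi$ descends by almost the full budget $2/m$ per step while never leaving $[0,M]$ and while the \emph{true} uniform stability constant — a supremum over all subsets, all removed indices, and all test points — still does not exceed $2/m$; pinning $L=M$ and $\beta_n\le 2/n$ simultaneously with a bias close to $2\log(K/(K-1))$ is what the hypotheses $M\ge 1$ and $n\le e^M$ are meant to make possible, and the matching arithmetic (in particular checking that the integral-comparison slack is dominated by the $1/M$ term for every admissible $(n,K,M)$) is the one step that genuinely needs care. Everything else — the two integral estimates and the observation that the Dirac design annihilates the deviation term $\DevCV$ — is routine.
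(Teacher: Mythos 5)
Your overall strategy coincides with the paper's: a degenerate design under which the loss of every trained predictor is a deterministic function $\phi(m)$ of the training-set size alone, so that $\rCV\left[\alg,V_{1:K}\right]$ and $\risk\left[\alg([n])\right]$ are constants, the deviation term vanishes, and the error reduces to the telescoping bias $|\phi(n)-\phi(n_T)|$; your upper bound and the reduction $\EE\left[|\cdot|\right]=|\EE\left[\cdot\right]|$ are exactly as in the paper. The gap is in the final quantitative step. If every one-point-removal increment is capped at $2/m$ (which is what ``$\beta_m\le 2/m$ for every $m$'' forces, and is what your schedule ``decreases by just under $2/m$ at each step'' produces), the largest bias you can generate is $2\sum_{i=n_T+1}^{n}1/i$, which is \emph{strictly} below $2\log(n/n_T)$; and your integral-comparison slack $\log(1+1/n_T)-\log(1+1/n)\approx 1/(Kn_T)$ is \emph{not} dominated by $\log\left(K/(K-1)\right)/M$ unless $n_T\gtrsim M$. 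The theorem allows $n$ as small as $K$ while $M$ is large: for $K=2$, $n=10$, $M=100$ one gets $2\sum_{i=6}^{10}1/i\approx 1.291 < 2\log 2\,(1-1/M)\approx 1.372$, so the arithmetic you propose cannot close. Relatedly, you invoke $n\le e^M$ to absorb this slack, but that hypothesis permits $n$ (hence $n_T$) to be \emph{small}, which makes the slack worse; it plays a different role in the proof.

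The paper avoids integral comparison altogether. It takes $X=\epsilon/M$ with $\epsilon$ Rademacher and $Y=M\operatorname{sign}(X)$, so that $XY\equiv 1$ and $X^2\equiv 1/M^2$ are deterministic, and chooses $\lambda_m=1/\log(m)-1/M^2$ so that the RERM coefficient trained on $m$ points is exactly $\log m$. The bias then comes out in closed form as $\log(n/n_T)\bigl(2-(\log n+\log n_T)/M^2\bigr)$: the increments are proportional to $\log m-\log(m-1)$ and telescope with no loss, and the hypothesis $n\le e^M$ is used precisely to bound the multiplicative factor $2-(\log n+\log n_T)/M^2\ge 2(1-1/M)$ (and to obtain the boundedness constant $L$). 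To repair your argument you should make $\phi(m)-\phi(m-1)$ proportional to $\log m-\log(m-1)$ rather than to $1/m$; note that this yields a per-step stability increment of order $2/(m-1)$ rather than $2/m$, a discrepancy the paper itself glosses over when it verifies stability only for the full sample.
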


\begin{proof}[Sketch of proof]
	First set $\mathcal{Z}=\mathcal{X}\times\mathcal{Y}\subset \rset \times \rset$ and consider the hypothesis class of linear regressors,
	$$ \mathcal{G}=\left\{g_b \mid b\in \rset \right\},$$
	
	where $g_b(x)=bx$ for $x\in \rset$. With the loss $\loss(g_b,o)=(y-b x)^2$, it can be shown that the solution to problem \ref{def:RERM} writes as  $\alg\left(\left[n\right]\right)=g_{b_n}$ with
	\begin{equation}\label{eq:RERM-constrained-form}
		b_n=f(\lambda_n),
	\end{equation}
	 for some decreasing function $f$. Now,  by carefully picking $\mathcal{Z}$ and $\lambda_n$ we construct a problem such that, for any $O\in \mathcal{Z}$ and $T\subset T^{\prime} \subset \left[n\right]$
	
	\begin{enumerate}
		\item $\lambda_n$ is decreasing.
		\item $	\loss\big(\alg\left(T\right),O\big)\geq \loss\big(\alg\left(T^{\prime}\right),O\big)$ .
		\item  $\loss\big(\alg(T^{\backslash i}),O\big) -\loss\big(\alg\left(T\right),O\big) \leq f\left(\lambda_{n}\right)-f\left(\lambda_ {n-1}\right).$
		\item $ f(\lambda_n)-f(\lambda_{n-1})  \leq \ffrac{2}{n}.$
	\end{enumerate} 
The result easily follows  from the three latter facts. For the detailed proof, the reader is deferred to Appendix \ref{sec:proof-LB-RERM}.
\end{proof}

\subsection{Stochastic Gradient Descent}

In this section, we consider the \textrm{SGD} update rule 
\begin{equation}\label{def:SGD}
\forall t \geq 0 \:,\: \alg_{t+1}=\alg_t-\alpha_{t,n} \nabla_\alg \ell\left(\alg_t, X_{i_t}\right),
\end{equation}

where $\nabla_\alg\loss$ denotes the derivative of $\loss$ with respect to the first argument, $i_t$ is the index picked by \textrm{SGD} at step $t$  and $\alpha_{t,n}\geq 0$ is the step size.\\
It is well known that \textrm{SGD} verifies the uniform stability assumption with respect to many losses \citep{hardt16,liu2017,charles18a}. For instance, when the loss function is convex, $\beta$ smooth, and $\sigma$-Lipschitz, it has been shown that SGD algorithm \citep{hardt16} is uniformly stable with parameter $\beta_n \leq  \ffrac{2\sigma^2}{n}\sum_{k=1}^{t}\alpha_{k,n} $. 

%
\begin{remark}
	For a fixed data sequence $\DD$, the output of \textrm{SGD} is random. Hence, the definition of \emph{random uniform stability} is introduced in Appendix \ref{sec:randomized} and is slightly different than Definition \ref{def:unif-stable}. However, most if not all the properties of deterministic uniformly stable  rules  (discussed before)  are preserved by random uniformly stable  learners (see \eg \cite{elisseeff05a,hardt16,liu2017}). 
\end{remark}

\begin{theorem}\label{theo:K-fold-LB-SGD} 
	
	Let $M>1$ be a real number , $2 \leq n\leq e^M$  an integer,  and $t\geq 1$ a maximum number of iterations . Set  the initialization $\alg_0=0$. Then, There exists an input space $\mathcal{Z}=\mathcal{X}\times \mathcal{Y}$, a convex loss function $\loss$ and a sequence of step sizes $(\alpha_{k,n})_{k \leq t}$  such as,
	\begin{align*}
	\EE\left[\big|\rCV\left[\alg_t,V_{1:K}\right] - \right. & \left. \risk\left[\alg_t([n])\right]\big| \right] \geq \ffrac{1}{3}\log\left(\frac{K}{K-1}\right).
	\end{align*}
	
	Furthermore,  $\alg_t$ fulfills Assumption \ref{assum:stability-setting} with  $L=M$ and 
	$$\beta_n\leq \ffrac{3}{n-1}\sum_{k=1}^{t}\alpha_{k,n}\leq  \ffrac{3M}{n-1}.$$
	
\end{theorem}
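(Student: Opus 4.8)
The plan is to exhibit an explicit degenerate instance, in the spirit of the \textrm{RERM} counterexample of Theorem~\ref{theo:K-fold-LB}, on which the \Kfold\ error can be computed in closed form. I would let $P$ be a Dirac mass at a single point $o_0 = (x_0, y_0)$ with $x_0 = 1$, take the class of linear predictors $\mathcal{G} = \{g_b \mid b \in \rset\}$, $g_b(x) = bx$, the initialization $\alg_0 = g_0$, and the affine convex loss $\ell(g_b, (x,y)) = 1 - bx$. This loss is convex, $1$-Lipschitz and $0$-smooth in $b$, with gradient $\nabla_b \ell \equiv -x = -1$ all along the trajectory, so that $t$ steps of \textrm{SGD} on a training subset $T$ of size $m$ --- whatever the random index sequence drawn by \textrm{SGD}, since all observations coincide --- produce the deterministic iterate $\alg_t(T) = g_{b_m}$ with $b_m = \sum_{k=1}^{t} \alpha_{k,m}$. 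As the output is deterministic, the random-uniform-stability notion of Appendix~\ref{sec:randomized} reduces here to Definition~\ref{def:unif-stable}, and it suffices to reason about the scalars $b_m$ and the induced risk $\risk[g_b] = \EE[\ell(g_b, O)] = 1 - b$.

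The crux is the choice of step sizes: I would set $\sum_{k=1}^{t} \alpha_{k,n} = \frac{1}{3}\log n$ for every $n$ in the range of interest (for instance $\alpha_{k,n} = \frac{1}{3t}\log n \ge 0$), so that $b_m = \frac{1}{3}\log m$ and $\risk[\alg_t(T)] = 1 - \frac{1}{3}\log |T|$. Three verifications then complete the proof. \textbf{(i) Boundedness:} for $T \subset [n]$ one has $b_{|T|} \in [0, \frac{1}{3}\log n] \subseteq [0, \frac{1}{3}M]$ because $n \le e^M$, hence $|\ell(\alg_t(T), O)| = |1 - b_{|T|}| \le M$ (using $M > 1$), so Assumption~\ref{assum:stability-setting} holds with $L = M$. \textbf{(ii) Stability:} removing one point from a training set of size $m$ changes the loss at $o_0$ by exactly $|b_m - b_{m-1}| = \frac{1}{3}\log\frac{m}{m-1} \le \frac{1}{3(m-1)} \le \frac{\log m}{m-1} = \frac{3}{m-1}\sum_{k=1}^{t}\alpha_{k,m} \le \frac{3M}{m-1}$ for $m \ge 2$, the two nontrivial steps using $\log m \ge \log 2 > \frac{1}{3}$ and $\sum_k \alpha_{k,m} = \frac{1}{3}\log m \le M$ respectively; taking $m = n$ gives precisely the announced stability bound, which is moreover consistent with the general \textrm{SGD} bound $\beta_n \le \frac{2\sigma^2}{n}\sum_k \alpha_{k,n}$ with $\sigma = 1$. \textbf{(iii) \Kfold\ error:} since $\mathcal{Z} = \{o_0\}$, every hold-out estimate coincides with the corresponding true risk, so the deviation term in the decomposition \eqref{ineq:error-decomp} vanishes, $\DevCV = 0$, while $\rCV[\alg_t, V_{1:K}] = \risk[\alg_t(T_j)] = 1 - \frac{1}{3}\log n_T$ and $\risk[\alg_t([n])] = 1 - \frac{1}{3}\log n$. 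Hence $\rCV[\alg_t, V_{1:K}] - \risk[\alg_t([n])] = \frac{1}{3}\log(n / n_T) = \frac{1}{3}\log\frac{K}{K-1}$ deterministically, which yields the stated inequality --- in fact with equality.

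The one genuine difficulty is to reconcile the three requirements: a loss bounded by $M$ together with a $\Theta(1/n)$ stability modulus caps the total displacement $\sum_k \alpha_{k,n}$ of \textrm{SGD}, yet the bias can be forced to be $\Omega(\log\frac{K}{K-1})$ only if that displacement varies enough with the sample size. The logarithmic schedule $\sum_k \alpha_{k,n} \propto \log n$ is exactly the sweet spot: its increments are $\propto 1/n$ --- which is what produces the $1/n$ stability and also why we must restrict to $n \le e^M$ so that the loss stays below $M$ --- while its cumulative value differs by $\propto \log(n / n_T) = \log\frac{K}{K-1}$ between the full sample and a fold's training set. This is the \textrm{SGD} analogue of the role played by the sequence $\big(f(\lambda_n)\big)_n$ in Theorem~\ref{theo:K-fold-LB}. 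The routine verifications, the edge cases in small sample sizes, and the bookkeeping of constants would be carried out in full in the appendix.
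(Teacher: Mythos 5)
Your construction is correct, and it rests on the same key mechanism as the paper's proof: a step-size schedule whose cumulative sum scales like the logarithm of the training-set size, so that its increments are $O(1/n)$ (which yields the claimed stability and, on the range $n\le e^M$, the boundedness $L=M$), while its cumulative value differs by $\log(n/n_T)=\log\frac{K}{K-1}$ between the full sample and a fold's training set (which yields the non-vanishing bias). The instantiations differ, however. The paper takes $\mathcal{X}=\{v,-v\}$ with $P(X=v)=2/3$, the loss $f(w,o)=\frac12 w^\top A w-yx^\top w$ with $Av=0$, and $\sum_k\alpha_{k,n}=\log n$; SGD is then genuinely random (the sign of each gradient step depends on the sampled index), so the argument must pass through the randomized-stability framework of Appendix~\ref{sec:randomized} and an expectation over the algorithm's randomness, and the factor $1/3$ arises from $\EE\left[(n_+-n_-)/n\right]=1/3$. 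You instead collapse $P$ to a Dirac mass, which makes SGD deterministic, and you absorb the $1/3$ into the schedule $\sum_k\alpha_{k,n}=\frac13\log n$. This buys a shorter and fully exact computation ($\DevCV=0$, the lower bound holds with equality) and sidesteps the randomized-stability machinery entirely, at the price of an even more degenerate counterexample in which SGD exhibits no stochasticity at all. Both versions satisfy the letter of the theorem; the minor loose ends in your sketch (the undefined case $n_T\le 1$, the fact that your loss can be negative---which Assumption~\ref{assum:stability-setting} permits) are equally present in, or immaterial to, the paper's own construction and do not constitute gaps.
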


\begin{proof}
	The proof is deferred to the appendix (section \ref{sec:proof-LB-SGD}).
\end{proof}

\begin{remark}
	In both examples, we suppose that the input is a binary random variable.  Such a restrictive setting serves as a corner case to derive lower bounds \citep{bousquet2020sharper,zhang2022stability}, suggesting the necessity of additional assumptions to ensure the consistency of  \Kfold\ scheme. 
\end{remark}
\begin{remark}[\textsc{boundedness of }$n$]
	With the current assumptions, the lower bounds don't ensure the inconsistency as the sample size $n$ grows to infinity. However, Theorems \ref{theo:K-fold-LB} and \ref{theo:K-fold-LB-SGD} in their current state prove that, one cannot obtain a standard universal vanishing upper bound on the expected estimation error of $K$-fold  ($\mathbb{E}[\mathrm{error}_{\textrm{CV}}]$) which would be valid for all sample spaces ($\mathcal{Z}$), all distributions ($P$) and all stable algorithms. In other words, one cannot construct a function $h(n)$ such that
	\begin{align*}
	\exists& n_0\in \mathbb{N}\:,\:\forall n\geq n_0\:,\: \forall (\mathcal{Z},P),\\ &\forall \mathcal{A}\text{ stable with parameter }\beta_n\leq \frac{1}{n}\:, \mathbb{E}\left[\mathrm{error}_{\textrm{CV}}\right]\leq h(n), 
	\end{align*}
	and  $h(n) \xrightarrow[n \to \infty]{}  0$.\\
	 An additional remark that ought to be made: if we relax Assumption \ref{assum:stability-setting} into : "the loss function  $\ell(\mathcal{A}\left([n]\right),\cdot)$ is $L\log(n)$-bounded and the algorithm $\mathcal{A}$ is $\frac{\log(n)}{n}$ uniformly stable", then all the upper bounds from the present paper remain valid (up to a $\log(n)$ factor), in particular the upper bound on the bias-corrected $K$-fold estimation error (Corollary \ref{cor:consistentCorrectedKfold} below). Furthermore, under this new assumption, the lower bound in Theorem \ref{theo:K-fold-LB-SGD} is valid $\forall n \in \mathbb{N}$, which yields the inconsistency of $K$-fold \textrm{CV}. More precisely, Theorem \ref{theo:K-fold-LB-SGD} becomes
	\begin{align*}
	\exists & (\mathcal{Z},P), \forall  n_0\in \mathbb{N}\:,	\:\forall n\geq n_0\:,\:\\& \exists \mathcal{A}\text{ stable with parameter }\beta_n\leq \frac{\log(n)}{n},\, \mathbb{E}\left[\mathrm{error}_{\textrm{CV}}\right]\geq \kappa,
	\end{align*}
	with $\kappa=\frac{1}{3}\log\left(\frac{K}{K-1}\right)$.
	One must note that the additional $\log(n)$ factor in the upper bound weakens the tightness of the lower bound, however this is compatible with  existing lower bounds from the stability literature \citep{bousquet2020sharper}.  
\end{remark}

\begin{discussion*}
	Theorems~\ref{theo:K-fold-LB} and \ref{theo:K-fold-LB-SGD} reveals the suboptimality of \Kfold\:CV in the general stability framework for risk estimation. For the purpose of model selection, which is arguably a harder problem,
	existing works have shown the suboptimality of \Kfold\ \CV. 
	For example, in a regression framework, under the linear model assumption, \cite{Yuhong2007} (See also \cite{yang2006} for classification problems) has shown that the usual K-fold procedure may not select the best model. For  efficient model selection, where the performance is measured in terms of generalization risk of the selected model, \cite{arlot2008} shows (See Theorem~1 in the latter reference) that \Kfold\ \CV can be suboptimal, \ie an example is provided  where the risk ratio between the selected model and the optimal one is uniformly greater (for all $n$) than $1+\kappa$  for some $\kappa>0$. In Theorem~\ref{theo:K-fold-LB} (\emph{resp.} \ref{theo:K-fold-LB-SGD}) of the present paper we show a stronger result, in so far as we consider the easier task of risk estimation, and we show that $\Kfold\ \CV$ with fixed $K$ does not enjoy sanity-check guarantees because for any $n$, there exists a regression (\emph{resp.} optimization) problem  where the error is at least $2\log(\frac{K}{K-1})$ \big(\emph{resp.} $\frac{1}{3}\log\frac{K}{K-1}$\big). It is worth mentioning at this stage that even if the uniform stability ensures the low variance of \Kfold\ (\cite{kumar2013near,bayle2020cross}, etc.), it is  not sufficient to control the bias. 
\end{discussion*}

In the next section, we show that adding correction terms
\citep{BURMAN89} to \Kfold\ \CV addresses the inconsistency issues
underlined by Theorems~\ref{theo:K-fold-LB} and \ref{theo:K-fold-LB-SGD}. The resulting \CV\ scheme enjoys both the
computational efficiency of the \Kfold\ (compared with the \loo) and finite sample guarantees  comparable to those of the \loo\ in view of the upper bound
stated in Theorem~\ref{theo:CVC-UB-det}.

\section{BIAS CORRECTED \textrm{K-FOLD}\:WITH STABLE LEARNERS}
\label{sec:BKF-UB}

A key ingredient of the lack of guarantee regarding the \Kfold\ risk estimate is its bias for finite sample sizes, an issue pointed out by \cite{BURMAN89} who proposes a corrected version of the standard \Kfold\ with a reduced bias, see also~\cite{burman1990estimation} for applications to model selection. In the present work we follow in the footsteps of \cite{BURMAN89} and consider the same  corrected \CV\ estimate of the generalization risk 
\begin{equation}
\begin{aligned}
\label{def:risk-CV-corr}
\rCVC\left[\alg,V_{1:K}\right] & =  \rCV\left[\alg,V_{1:K}\right]\\
& \hspace{-15mm}  + \ER\left[\alg([n]),[n]\right]
-\frac{1}{ K} \sum_{j= 1}^{K} \ER\left[\alg(T_j),[n]\right]. 
\end{aligned}
\end{equation}

The correcting term (second line in the above display) is the average  difference  between   $\risk\left[\alg([n])\right]$ and the empirical risks $\risk\left[\alg(T_j)\right]$'s. 
In \cite{BURMAN89} the analysis is carried out in an asymptotic
framework and focuses on the asymptotic bias and variance of the
estimator for different \CV\ schemes.  The results are obtained under
differentiability assumptions regarding the loss function (See also
the appendix section in \cite{fushiki2011estimation} where these
assumptions are explicitly listed) which are typically not satisfied
by SVM, stochastic gradient methods or $L^1$-regularized risk minimization algorithms. In
\cite{burman1990estimation} the working assumption is that of a linear
model and the specific task is ordinary linear
regression.  
In contrast, we conduct here a non asymptotic analysis (valid for any
sample size) which applies to any uniformly stable
algorithm. 
\begin{theorem}
\label{theo:CVC-UB-det}
Suppose that Assumption~\ref{assum:stability-setting} holds. Then, we
have, with probability $1-6\delta$,
 \begin{align*}
 	\bigg|\rCVC\left[\alg,V_{1:K}\right]-\TR\big[&\alg([n])\big]\bigg|\leq  2\left(\beta_n+\beta_{n_T}\right)\\ &+3(4\beta_{n_T}n_T+2L)\sqrt{\ffrac{\log(1/\delta)}{2n}}.
 \end{align*}

\end{theorem}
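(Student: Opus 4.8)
The plan is to reduce $\rCVC[\alg,V_{1:K}]-\risk[\alg([n])]$ to a sum of three pieces, each of which is either already bounded in the proof of Theorem~\ref{theo:CV-stable-bound} or is a training–error/generalization gap of a \emph{single} trained predictor, hence controllable by the classical stability argument (Definition~\ref{def:unif-stable} combined with McDiarmid's inequality, Proposition~\ref{prop:Mcdiarmids-ineq}). \emph{Step 1 (algebraic decomposition).} Write $g=\alg([n])$ and $g_j=\alg(T_j)$. Since $[n]=T_j\cup V_j$ with $|T_j|=n_T$ and $|V_j|=n_V$, one has $\ER[g_j,[n]]=\frac{n_T}{n}\ER[g_j,T_j]+\frac{n_V}{n}\ER[g_j,V_j]$. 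Substituting this into \eqref{def:risk-CV-corr}, subtracting $\risk[g]$, and using $\frac1K\sum_j\ER[g_j,V_j]=\rCV$, $\frac1K\sum_j\risk[g_j]=\risk_{CV}$ and $1-\frac{n_V}{n}=\frac{n_T}{n}$, a short rearrangement gives the identity
\begin{equation*}
\rCVC[\alg,V_{1:K}]-\risk[g]\;=\;\frac{n_T}{n}\,\DevCV\;+\;B_1\;+\;\frac{n_T}{n}\,S,
\end{equation*}
with $\DevCV=\rCV-\risk_{CV}$ as in \eqref{def:dev_CV}, $B_1=\ER[g,[n]]-\risk[g]$ the (signed) empirical-minus-true risk of the full-sample predictor, and $S=\frac1K\sum_{j=1}^{K}\big(\risk[g_j]-\ER[g_j,T_j]\big)$ the average of the analogous quantities for the $K$ sub-sample predictors. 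The key structural point is that neither $B_1$ nor $S$ compares $g$ with $g_j$, so Fact~\ref{fact:Bias-UB} is \emph{not} invoked and the non-vanishing term $\sum_{i=n_T+1}^{n}\beta_i$ of the raw \Kfold\ never reappears.

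\emph{Step 2 (controlling the three terms).} For $\DevCV$ we simply quote the proof of Theorem~\ref{theo:CV-stable-bound}: with probability at least $1-2\delta$, $|\DevCV|\le (4\beta_{n_T}n_T+2L)\sqrt{\log(1/\delta)/(2n)}$. For $B_1$ and $S$ we use the two standard ingredients of stability-based analysis. First, their expectations are small: by the usual relabelling (ghost-sample) argument applied to $\EE[\risk-\ER[\,\cdot\,,\text{train set}]]$, $|\EE B_1|\le 2\beta_n$ and, since each $g_j$ is trained on $n_T$ points, $|\EE S|\le 2\beta_{n_T}$. Second, each of $B_1,S$ is a function of $\DD=(O_1,\dots,O_n)$ with small bounded differences: replacing one $O_k$ changes $\loss(g,\cdot)$ uniformly by at most $2\beta_n$ (triangle inequality on Definition~\ref{def:unif-stable}) and changes one summand of $\ER[g,[n]]$ by at most $2L/n$, so $B_1$ has bounded differences $O(\beta_n+L/n)$; for $S$, Fact~\ref{fact:mask-property} guarantees that $k$ belongs to exactly $K-1$ of the sets $T_j$, so only those summands move, each by $O(\beta_{n_T}+L/n_T)$, and after averaging over $j$ the bounded differences of $S$ are $O(\beta_{n_T}+L/n)$. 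McDiarmid's inequality then bounds, each with probability at least $1-2\delta$, the deviation of $B_1$ (resp.\ $S$) from its mean by a multiple of $(\beta_{n_T}n_T+L)\sqrt{\log(1/\delta)/(2n)}$; keeping track of constants (and using that $(\beta_t)_t$ is decreasing together with $n_T\le n$) each such coefficient is at most $4\beta_{n_T}n_T+2L$.

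\emph{Step 3 (conclusion).} A union bound over the three two-sided events fails with probability at most $6\delta$; combining the estimates via the triangle inequality and $\frac{n_T}{n}\le 1$ yields
\begin{equation*}
\big|\rCVC[\alg,V_{1:K}]-\risk[g]\big|\;\le\;2\big(\beta_n+\beta_{n_T}\big)\;+\;3\,(4\beta_{n_T}n_T+2L)\sqrt{\frac{\log(1/\delta)}{2n}},
\end{equation*}
which is the claimed bound. I expect the only genuinely delicate step to be Step~1: one must find the rearrangement that makes the $\frac{n_V}{n}$-part of $\ER[g_j,[n]]$ merge with $\DevCV$ while isolating an \emph{honest}, small-bias training error $\ER[g_j,T_j]$, so that the overall error is assembled only from quantities whose bias is $O(\beta)$ rather than $O(n_V\beta)$. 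Once this cancellation is in place, Steps~2 and 3 are the routine uniform-stability-plus-McDiarmid machinery.
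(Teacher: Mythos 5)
Your Step 1 identity is, after substituting $S=-\DevT$, exactly the decomposition \eqref{ineq:error-decom-cvc} used in the paper, and Steps 2--3 --- McDiarmid's inequality with the $4\beta_{n_T}n_T+2L$ bounded differences for each of the three deviation terms, the ghost-sample bounds $|\EE B_1|\le 2\beta_n$ and $|\EE S|\le 2\beta_{n_T}$, and a union bound costing $6\delta$ --- reproduce Lemmas~\ref{lemma:Dev-CV-UB} and \ref{lemma:Dev-train-UB} and Remark~\ref{remark:bousquet-UB-gen}. The proposal is correct and follows essentially the same route as the paper's proof.
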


\begin{proof}[Sketch of proof]
  By simple algebra write the corrected \CV\ estimator as
  \begin{align*}
	\rCVC\big[\alg,&V_{1:K}\big] = \ER\left[\alg([n]),[n]\right]\nonumber \\+ &\ffrac{n_T}{nK}\sum_{j=1}^{K}\left[\ER\left[\alg(T_j),V_j\right]-\ER\left[\alg(T_j),T_j)\right]\right] .
\end{align*}
	From this,  deduce the following error decomposition
	\begin{equation}
          \begin{aligned}
            \rCVC\left[\alg,V_{1:K}\right]- &\risk\left[\alg[n]\right] = 
            \DevAll \\
            &+\ffrac{n_T}{n}(\DevCV-\DevT), \label{ineq:error-decom-cvc}
          \end{aligned}
	\end{equation}
	where $\DevCV$ is  defined in \eqref{def:dev_CV} and 
	\begin{align}
	\hspace{-1mm}\DevAll &= \ER\left[\alg[n],[n]\right]-\risk\left[\alg[n]\right]\label{def:dev_all}, \\
	\hspace{-1mm}\DevT &=\ffrac{1}{K}\sum_{j=1}^{K}\left( \ER\left[\alg(T_j),T_j\right]-\risk\big[\alg(T_j)\big]\right).\label{def:dev_train}
	\end{align}
        Notice the absence of a bias term in the above display, contrarily to the error decomposition (\ref{ineq:error-decomp}) for the standard \CV\ estimate in the proof of Theorem~\ref{theo:CV-stable-bound}. 
	The remaining technical arguments for bounding each term of the decomposition above are gathered in the supplement.  
        Namely, using McDiarmid's inequality (Proposition \ref{prop:Mcdiarmids-ineq}), we derive concentration bound both for $\DevCV$ as stated in Lemma \ref{lemma:Dev-CV-UB} and for $\DevT$ as given in Lemma \ref{lemma:Dev-train-UB}. Finally the deviations of the empirical risk $\DevAll$ are controlled using Remark \ref{remark:bousquet-UB-gen}, a consequence of Lemma  \ref{lemma:Dev-train-UB},  in the supplement.
\end{proof}

Theorem~\ref{theo:CVC-UB-det} yields immediately a consistent upper
bound for the \Kfold\:\CV\ error when $\beta_n\leq C/n$ for some
$C>0$. For the proof it suffices to notice that $n_T=\frac{K-1}{K}n$
for the \Kfold\:scheme and to use that $ { (2K-1) } / {(K-1) } \leq 3$
whenever $K\geq 2$.
\begin{corollary}\label{cor:consistentCorrectedKfold}
	Suppose that Assumption~\ref{assum:stability-setting} holds with $\beta_n \leq \ffrac{C}{n}$, for some $C>0$. Then, for $K\ge 2$, the error of the  corrected \Kfold\:\CV\  estimate  of the generalization risk satisfies with probability at least $1-6\delta$, 
	\begin{align*}
		\bigg|\ER^{corr}_{CV}\left[\alg,V_{1:K}\right]-\TR\big[&\alg([n])\big]\bigg|\leq  \ffrac{6C }{n} \\ &+3(4C+2L)\sqrt{\ffrac{\log(1/\delta)}{2n}}.
	\end{align*}
\end{corollary}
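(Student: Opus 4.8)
The plan is to invoke Theorem~\ref{theo:CVC-UB-det} directly and to simplify the resulting bound using the hypothesis $\beta_n \leq C/n$ together with the specific structure of the \Kfold\ scheme. First I would recall that for \Kfold\ \CV\ one has $n_T = \frac{K-1}{K}\,n$, hence $\beta_{n_T} \leq C/n_T = \frac{CK}{(K-1)n}$ and, crucially, $\beta_{n_T}\,n_T \leq C$. The latter inequality keeps the ``variance'' factor $4\beta_{n_T}n_T + 2L$ under control, replacing it by $4C + 2L$ uniformly in $K$ and $n$.

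Next I would treat the ``bias'' term $2(\beta_n + \beta_{n_T})$ appearing in Theorem~\ref{theo:CVC-UB-det}. Plugging in the two bounds above yields
\[
2(\beta_n + \beta_{n_T}) \leq \frac{2C}{n}\left(1 + \frac{K}{K-1}\right) = \frac{2C}{n}\cdot\frac{2K-1}{K-1}.
\]
It then remains to observe that the map $K \mapsto \frac{2K-1}{K-1}$ is nonincreasing on $[2,\infty)$ with value $3$ at $K=2$ (equivalently, $2K-1 \leq 3(K-1)$ iff $K \geq 2$), so that $\frac{2K-1}{K-1} \leq 3$ for every integer $K \geq 2$; this gives $2(\beta_n + \beta_{n_T}) \leq 6C/n$.

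Combining these estimates with the conclusion of Theorem~\ref{theo:CVC-UB-det} produces, with probability at least $1-6\delta$,
\[
\left|\rCVC\left[\alg,V_{1:K}\right] - \TR\big[\alg([n])\big]\right| \leq \frac{6C}{n} + 3(4C + 2L)\sqrt{\frac{\log(1/\delta)}{2n}},
\]
which is exactly the claimed inequality. I do not expect any genuine obstacle here: the statement is a purely algebraic specialization of Theorem~\ref{theo:CVC-UB-det}, and the only mildly delicate point is the uniform bound $\frac{2K-1}{K-1} \leq 3$ used to absorb the $K$-dependence of the $1/n$ term into the constant.
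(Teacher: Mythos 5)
Your proposal is correct and follows exactly the same route as the paper, which likewise observes that $n_T = \frac{K-1}{K}n$ for the \Kfold\ scheme, that $\beta_{n_T} n_T \leq C$, and that $\frac{2K-1}{K-1}\leq 3$ for $K\geq 2$ before plugging into Theorem~\ref{theo:CVC-UB-det}. No gaps; the argument is a straightforward algebraic specialization, just as you describe.
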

\begin{discussion*}
  Corollary~\ref{cor:consistentCorrectedKfold}  confirms the relevance of the bias corrected \Kfold\:for risk estimation 
 in the broad context of uniformly  stable learners satisfing $\beta_n\leq \frac{C}{n}$, such as SVM, stochastic gradient methods  or regularized empirical risk minimizers.
  
	
\end{discussion*}

Our main results Theorem~\ref{theo:CVC-UB-det} and
Corollary~\ref{cor:consistentCorrectedKfold} concern the problem of
risk estimation by means of \Kfold~\CV. However in practice, \CV\ is
widely used in the context of model- or parameter \emph{selection}. It is
precisely the purpose of the next section to illustrate how our
results shed light on such practice. Namely we consider the practical
problem of selecting a penalty parameter among a finite collection of
candidates for Support Vector Regression and Classification.





%


\section{APPLICATION TO HYPER-PARAMETER SELECTION AND NUMERICAL EXPERIMENTS} 
\label{sec:model-selec}

Selecting a penalty parameter in regularized empirical risk
minimization algorithms such as SVM's may be viewed as a particular
instance of a model selection problem, where one identifies a model
with an algorithm equipped with a particular choice of regularization
parameter. 
We start-off this section with a brief introduction to this topic.  We provide minimal theoretical guarantees (Proposition~\ref{theo:model-selec-CVC}) regarding model selection within a  finite collection of models by means of the debiased \Kfold\xspace procedure. We describe our experimental setting in Subsections~\ref{sec:experiments-setting},~\ref{sec:datasets} and we report our results in Section~\ref{sec:expe-results}.

\subsection{Cross-Validation for Parameter Selection}\label{sec:model-select-context}
Following the terminology of \cite{arlot2016VFchoice}, we
consider here the problem of \emph{efficient} model selection, aiming at selecting a model for which the generalization risk of the
learnt predictor is close to the smallest possible risk. The goal of efficient model selection is in general easier to attain than \emph{identification}  of the best possible model. 
  It is a known fact that  cross-validation is in general sub-optimal for
  model  \emph{identification} purpose. 
  A major reason for this is that different models (or algorithms) may have comparable performance. However if one aims only at selecting a model for which the generalization risk is close to that of the optimal one, lack of identifiability is not necessarily an issue anymore,  which is precisely the approach we take here.    


Model selection is a prominent topic in statistical learning theory which is by far too broad to be extensively covered here. We refer the reader to the monograph of ~\cite{massart2007concentration}. Cross-validation is one of several  possible candidate methods for  this problem, which has been the subject of a wealth of literature as discussed in the introduction, see also~\cite{arlot2010survey} for a review.     

Given a family of models (or algorithms) $\alg^{(m)}$ indexed by $m \in \mathcal{M}$ and a dataset $\DD$ of size $n$, an optimal model $\alg^{(m^*)}$ called an oracle is any model  such that
\begin{equation}\label{def:oracle}
	 \alg^{(m^*)} \in \argmin_{m\in \mathcal{M}}\risk\left[\alg^{(m)}([n])\right].
\end{equation}
Since the true risk is unknown, an empirical criterion must be used instead to select an efficient model. One of the most popular tools for model selection is \Kfold\ \CV. However, as discussed earlier, the latter procedure may not be consistent. To tackle this issue, we propose to use the corrected \Kfold\: and select a model $\alg^{\widehat m}$ such that
\begin{equation}\label{def:opt-Kfold}
	 \alg^{(\widehat m)} \in \argmin_{m\in \mathcal{M}}\ER^{corr}_{\mathrm{Kfold}}\left[ \alg^{(m)},V_{1:K}\right]. 
\end{equation}
Proposition~\ref{theo:model-selec-CVC} provides a sanity check guarantee regarding the consistency of the corrected \Kfold\xspace\CV procedure for this purpose, in the form of an  upper bound in probability for  the excess risk $\risk\left[\alg^{(\hat m)}[n]\right] - \risk\left[\alg^{(m^*)}([n]) \right]$. The result applies to  the case where the family of models $\mathcal{M}$ is finite.
  
\begin{proposition}\label{theo:model-selec-CVC}
	Let $(\alg^{(m)})_{m\in \mathcal{M}}$ be a family of algorithms where each learner $\alg^{(m)}$ is $(\beta_{m,t})_{1\leq t \leq n}$ uniformly stable with respect to a loss function $0\leq \loss(g,O)\leq L$. Additionally, assume that, 
	$$ \forall m \in \mathcal{M} \:;\: \beta_{m,t} \leq \ffrac{M}{t},$$
	for some $M>0$. Then one has, with probability at least $1-6\delta$,
	\begin{align*}
		\risk\big[\alg^{(\hat m)}([n])\big]-\TR\big[&\alg^{(m^*)}([n])\big]\leq  \ffrac{12M }{n}\\ &+6(4M+2L)\sqrt{\ffrac{\log(\abs{\mathcal{M}}/\delta)}{n}}.
	\end{align*}
\end{proposition}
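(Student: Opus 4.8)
The plan is to combine the uniform upper bound for the bias-corrected $K$-fold risk estimate (Theorem~\ref{theo:CVC-UB-det}, specialized via Corollary~\ref{cor:consistentCorrectedKfold}) with a union bound over the finite model collection $\mathcal{M}$, followed by the standard ``oracle inequality'' argument that exploits the definition of $\widehat m$ as the minimizer of the corrected criterion. Concretely, for each fixed $m \in \mathcal{M}$, the learner $\alg^{(m)}$ is $(\beta_{m,t})$ uniformly stable with $\beta_{m,t}\le M/t$ and the loss is bounded by $L$, so Corollary~\ref{cor:consistentCorrectedKfold} applies (with $C=M$, and the confidence parameter rescaled): with probability at least $1 - 6\delta/\abs{\mathcal{M}}$,
\[
\bigl|\rCVC[\alg^{(m)},V_{1:K}] - \risk[\alg^{(m)}([n])]\bigr|
\le \frac{6M}{n} + 3(4M+2L)\sqrt{\frac{\log(\abs{\mathcal{M}}/\delta)}{2n}} =: \varepsilon_n.
\]

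First I would take a union bound over all $m\in\mathcal{M}$: the event that the above holds \emph{simultaneously} for every $m$ has probability at least $1 - 6\delta$. Work on this event from now on. Then I would run the classical comparison: by definition of $\widehat m$ in \eqref{def:opt-Kfold}, $\rCVC[\alg^{(\widehat m)},V_{1:K}] \le \rCVC[\alg^{(m^*)},V_{1:K}]$. Chaining the three inequalities — approximate $\risk[\alg^{(\widehat m)}([n])]$ from above by $\rCVC[\alg^{(\widehat m)},V_{1:K}] + \varepsilon_n$, use the minimality to pass to $\rCVC[\alg^{(m^*)},V_{1:K}] + \varepsilon_n$, and then bound that above by $\risk[\alg^{(m^*)}([n])] + 2\varepsilon_n$ — yields
\[
\risk[\alg^{(\widehat m)}([n])] - \risk[\alg^{(m^*)}([n])] \le 2\varepsilon_n,
\]
which is exactly the claimed bound after substituting $\varepsilon_n$ and simplifying the constant: $2\cdot\tfrac{6M}{n} = \tfrac{12M}{n}$ and $2\cdot 3(4M+2L)\sqrt{\tfrac{\log(\abs{\mathcal{M}}/\delta)}{2n}} = 6(4M+2L)\sqrt{\tfrac{\log(\abs{\mathcal{M}}/\delta)}{2n}} \le 6(4M+2L)\sqrt{\tfrac{\log(\abs{\mathcal{M}}/\delta)}{n}}$ (keeping the slightly looser form stated in the proposition).

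There is essentially no serious obstacle here; the argument is a routine oracle inequality once Theorem~\ref{theo:CVC-UB-det} is in hand. The only point requiring a little care is the bookkeeping of the failure probability: Theorem~\ref{theo:CVC-UB-det} already ``costs'' $6\delta$ per model because it is itself an application of several McDiarmid bounds, so when we want a single $6\delta$ budget over the whole collection we must feed each invocation the parameter $\delta' = \delta/\abs{\mathcal{M}}$, which is what produces the $\log(\abs{\mathcal{M}}/\delta)$ inside the square root. I would also remark that the argument is completely agnostic to \emph{how} the models differ (different regularization parameters, different algorithms entirely), and that finiteness of $\mathcal{M}$ enters only through this union bound — extending to countable or parametric families would require a covering/chaining refinement, which is outside the present scope.
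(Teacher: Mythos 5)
Your proposal is correct and follows essentially the same route as the paper: a per-model application of the corrected $K$-fold concentration bound with confidence parameter rescaled to $\delta/\abs{\mathcal{M}}$, a union bound to obtain the uniform deviation event (the paper packages this as Proposition~\ref{prop:uniform-CVC-UB}), and the standard oracle-inequality chain using the minimality of $\widehat m$, yielding the factor $2$ in front of the uniform deviation. The constant bookkeeping, including the absorption of the $1/\sqrt{2}$ into the looser $\sqrt{\log(\abs{\mathcal{M}}/\delta)/n}$ form, matches the statement.
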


\begin{proof}
  The proof consists in applying a union bound combined with the
  exponential tail bound of Theorem~\ref{theo:CVC-UB-det}, which
  yields a multiplicative constant which only depends logarithmically
  on the number of models. The details are gathered in the
  supplementary material,
  Section~\ref{prop:uniform-CVC-UB}. 
\end{proof}
\begin{discussion*}[bounded stability parameters $\beta_{m,t}$'s]
	The assumption $\beta_{m,t} \leq \ffrac{M}{t}$ for all $m$  is indeed verified in many applications. For example, in regularized SVM, where each learner $\alg^{(m)}$ is trained using a regularization parameter $\lambda_m$, \cite{bousquet2002stability} show that $\beta_{m,t}=\ffrac{1}{t}\sqrt{\frac{C}{\lambda_m}}$ where  $C$ is a positive constant. Thus, if one performs a grid search for $\lambda_m$ on a grid $[a,b]$ with $a>0$, then $\beta_{m,t}\leq \ffrac{1}{t}\sqrt{\frac{C}{a}}$ for any $\lambda_m \in [a,b]$. In other words, since the search space is generally bounded, the boundedness assumption regarding the $\beta_{m,t}$'s  is not too restrictive in practice.
\end{discussion*}

\subsection{Support Vector Machines and Experimental Setting}\label{sec:experiments-setting}

The aim of our experiments is to illustrate empirically the added
value of the corrected \Kfold\ compared with the standard one in
terms of efficiency in model selection. In other words, we perform
model selection with the \Kfold\ and the corrected version that we
promote and we compare the generalization risks  of the selected
trained models.

We 
consider a finite family of  SVM regressors and classifiers trained with a regularization parameter $\lambda_m$ ranging in a finite grid in an interval $[a,b]$, where  $m\in\mathcal{M}$ a finite index set.
Namely we set $[a,b]=[0.1,100]$, and the grid is constructed with a  constant step size equal to $\Delta = 0.1$, so that $$(\lambda_m)_{m\in \mathcal{M} }=\{a+j\Delta \mid 0 \leq j < 1000\}.$$ 
 In this SVM framework, 
$$\alg^{(m)}(T)=\argmin_{f\in \mathcal{F}}\ffrac{1}{n_T} \sum_{i\in T}\loss(f,O_i)+\lambda_m \lVert f \rVert^2_k$$
where $\mathcal{F}$ is  a reproducing kernel Hilbert space with kernel $k$. The kernel $k$ is chosen here as the sigmoid
kernel $\tanh( \tau \langle x, \rangle )$. Following standard
practice we set $\tau=\frac{1}{d}$, where $d$ is the dimension of the
dataset.
We  use the quadratic loss for regression problems and  the hinge loss $\loss(g,(x,y))= (1-yg(x))_{+}$ for classification, where $(f(x))_+=max(0,f(x))$.  Since the training datasets are bounded, we may consider that  both these losses as bounded as well. For both these losses the algorithm $\alg^{(m)}$ is $\ffrac{C}{n}$ uniformly stable (see \cite{bousquet2002stability} for further details).
The assumptions of Theorem~\ref{theo:model-selec-CVC} are thus satisfied, as pointed out in the discussion following the theorem's statement.

\subsection{Datasets}\label{sec:datasets} 
Eight reference datasets from UCI \footnote{\url{https://archive.ics.uci.edu}} are considered: four classification datasets and four regression datasets listed below.
\paragraph{Regression datasets } \emph{Real estate valuation} (REV, 414 house price of unit area with 5 covariates); \emph{QSAR fish toxicity } (906 toxic chemical concentration with 6 attributes); \emph{Energy efficiency} (EE, 768 heating loads with 8 features.); \emph{Concrete Compressive Strength} (CS, 1030 measure of the compressive strength with 8 attributes).
\paragraph{Classification datasets }\emph{Ionosphere dataset} (IO, 351 radar returns with 34 attributes), \emph{Raisin dataset} (RS, 900 Keciman/Besni raisin with 7 attributes), \emph{Audit risk dataset} (AR, 777 firm evaluation (fraudulent/non fraudulent) with 18 risk factors) and \emph{QSAR bio degradation Data Set } (BIODEG, 1055 chemicals categorization with 12 descriptive features).

For  each dataset one third of the data  are removed ($\mathcal{S}$)  and reserved for testing, \ie for evaluating the generalization risk of the  model selected using the remaining two thirds ($\DD$). 
Let  $\hat m_{\mathrm{Kf}}$ (\emph{resp.} $\hat m_{\mathrm{Kf-corr}}$)  denote the model selected using \Kfold \ \CV\ (\emph{resp.} corrected \Kfold\ \CV)   on the train set $\DD$. In other words
\begin{equation*}
\begin{aligned}
		 \alg^{(\widehat m_{\mathrm{Kf-corr}})}&=\argmin_{m\in \mathcal{M}}\ER^{corr}_{\mathrm{Kfold}}\left[ \alg^{(m)},V_{1:K}\right], \\
		 \alg^{(\widehat{m}_{\mathrm{Kf}})}&=\argmin_{m\in \mathcal{M}}\ER_{\mathrm{Kfold}}\left[ \alg^{(m)},V_{1:K}\right].
\end{aligned}
\end{equation*}
In the end, in line with Section~\ref{sec:model-selec}, the performance of both models are compared in terms of the mean squared error (or hinge loss for classification)  and its  estimated  standard deviation on the test set $\mathcal{S}$.

\subsection{Numerical Results}\label{sec:expe-results}

We use the implementation provided by the python library
\texttt{scikit-learn}.  
Tables~\ref{table:regression-results} and~\ref{table:classification-results} gather the results obtained respectively with the regression and classification datasets, for different numbers of folds $K$ varying between $3$ and $5$. In all  cases, the model selected by the bias corrected \Kfold\: has a lower generalization risk than the one selected by the standard \Kfold.  As expected,  the standard  \Kfold\:procedure behaves generally  better with  $K=5$ than with $K=3$. Indeed larger values of $K$ decrease the bias of the \Kfold\ \CV. The benefit of the bias correction is thus all the more important for small values of $K$. 

\begin{table}[!h]
	\caption{Regression mean squared errors for the \Kfold\: and the bias corrected \Kfold\:
		on various data sets. Estimated standard deviations are reported between parentheses.}
	\label{table:regression-results}
	\vskip 0.15in
	\begin{center}
		\begin{small}
			\begin{sc}
				\begin{tabular}{@{}rccr@{}}
					\toprule
					Dataset & K-fold & Bias corrected K-fold \\ 
					\midrule
      				 REV; K=3 & 74.198 (12.57) & \textbf{68.958 (11.63)} \\ 
					 K=4 & 74.189 (12.48)  & \textbf{68.958 (11.63)} \\ 
					 K=5 & 73.359 (12.38) & \textbf{68.958 (11.63)} \\ 
					\midrule
					EE; K=3 & 15.501  (1.81) & \textbf{14.405  (1.86)} \\ 
					K=4 & 15.825 (1.84) & \textbf{14.350 (1.77)} \\ 
					K=5 & 14.730 (1.73) & \textbf{14.298 (1.79)} \\ 
					\midrule
					QSAR; K=3 & 1.183 (0.16) & \textbf{1.035 (0.14)} \\ 
				    K=4 & 1.112  (0.15) & \textbf{1.035 (0.14)} \\ 
				    K=5 & 1.112 (0.15) & \textbf{1.035 (0.14)} \\ 
					\midrule
					CS; K=3 & 146.881 (13.81) & \textbf{126.492(10.23)} \\ 
				        K=4 & 144.195  (13.16) & \textbf{124.205 (9.46)} \\ 
						K=5 & 137.060  (11.48) & \textbf{123.641 (9.30)}  \\ 
					\bottomrule
				\end{tabular}
			\end{sc}
		\end{small}
	\end{center}
	\vskip -0.1in
    \end{table}

\begin{table}[!h]
	\caption{Hinge losses for the \Kfold\: and the bias corrected \Kfold\:
		on various data sets. Estimated standard deviations are reported between parentheses.}
	\label{table:classification-results}
	\vskip 0.15in
	\begin{center}
		\begin{small}
			\begin{sc}
				\begin{tabular}{@{}rccr@{}}
					\toprule
					Dataset & K-fold & Bias corrected K-fold \\ 
					\midrule
					RS; K=3 & 0.470 (0.048) & \textbf{0.419 (0.038)}\\ 
					K=4 & 0.420 (0.039) & \textbf{0.418 (0.039) } \\ 
					K=5 & 0.420  (0.039) & \textbf{0.419 (0.038)} \\ 
					\midrule
					IO; K=3 & 0.454 (0.081) & \textbf{0.414 (0.072)} \\ 
					K=4 & 0.447 (0.092) & \textbf{0.425 (0.072)} \\ 
						K=5 & 0.477 (0.091)  & \textbf{0.464 (0.095)} \\ 
					\midrule
					BIODEG; K=3 & 0.361 (0.039)& \textbf{0.357 (0.036) } \\ 
						    K=4 & 0.363 (0.037) & \textbf{0.357 (0.036)}   \\ 
							K=5 & 0.381 (0.041) & \textbf{0.357 (0.036)} \\ 
				    \midrule	
					AR; K=3 & 0.112 (0.023) & \textbf{0.109 (0.021)} \\ 
						K=4 & 0.108 (0.027) & \textbf{0.105 (0.025)} \\ 
						K=5 & 0.107 (0.025) &  \textbf{0.106 (0.024)} \\ 
					\bottomrule
				\end{tabular}
			\end{sc}
		\end{small}
	\end{center}
	\vskip -0.1in
\end{table}



\section{Conclusion}
This paper demonstrates the limitations of the standard \Kfold\
procedure for risk estimation \emph{via} a lower bound on its error
with a uniformly stable learner. We show that the corrected
version of the \Kfold \ for uniformly stable algorithms does not
suffer the same drawbacks through a sanity-check upper bound
and leverage this result to obtain guarantees regarding efficient model
selection. This paves the way towards two possible research
directions. A relevant follow-up would be to relax our uniform stability
assumption in order to cover  still a wider class of algorithms such as k-nearest-neighbors \citep{DEvroy-79}, Adaboost \citep{FREUND97} and Lasso regression \citep{celisse2016stability}. A second promising avenue would be to consider an extension of the \Kfold\
\emph{penalization} proposed by~\cite{arlot2016VFchoice} to the class of
stable learners.


\bibliography{biblio_crossval}

\appendix 

\onecolumn

\section{MAIN TOOLS}\label{sec:main-tools}
First we recall McDiarmid's inequality (Theorem 3.1 in \citet{McDiarmid98conc}).
\begin{prop}[McDiarmid's inequality]\label{prop:Mcdiarmids-ineq}
	let $Z=f(\DD)$ for some measurable function $f$ and define 
	\[ \Delta_l(\DD,O')= f(\DD)-f(\DD^l), \]
	where  $\DD^{l}$ is obtained by replacing the $l$'th element of $\DD$ by a sample $O' \in \mathcal{Z}$.\,In addition,\,suppose that 
	\[  \forall l \in [n] \:,\:\sup_{\DD\in \mathcal{Z}^n}\sup_{O' \in \mathcal{Z}} \lvert  \Delta_l(\DD,O') \rvert  \leq c_l.\]
	Then for any $t\geq 0$,
	\[ \PP(Z-\EE(Z)\geq t ) \leq \exp{\left( \frac{-2t^2}{\sum_{l=1}^n c_l^2}\right)}.\]
\end{prop}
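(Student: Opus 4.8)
This is the classical bounded-differences (McDiarmid) inequality, recalled here for completeness, so the plan is the standard one: a Doob-martingale decomposition followed by a Chernoff bound. First I would set up the martingale. Let $\mathcal{F}_0=\{\emptyset,\Omega\}$ and $\mathcal{F}_i=\sigma(O_1,\dots,O_i)$ for $1\le i\le n$, and put $Z_i=\EE[Z\mid\mathcal{F}_i]$. Then $Z_0=\EE(Z)$ and $Z_n=Z$ almost surely, since $Z=f(\DD)$ is $\mathcal{F}_n$-measurable; the increments $V_i=Z_i-Z_{i-1}$ form a martingale difference sequence with $\EE[V_i\mid\mathcal{F}_{i-1}]=0$ and $Z-\EE(Z)=\sum_{i=1}^n V_i$.

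Second --- the one step where the hypothesis on $f$ is genuinely used --- I would bound the conditional range of each $V_i$. Because $O_1,\dots,O_n$ are independent, one may write $Z_i=g_i(O_1,\dots,O_i)$ where $g_i$ is obtained from $f$ by integrating out $(O_{i+1},\dots,O_n)$ against their joint law; freezing the first $i-1$ coordinates, the assumption $\sup_{\DD}\sup_{O'}\lvert\Delta_i(\DD,O')\rvert\le c_i$ forces the oscillation of $g_i(O_1,\dots,O_{i-1},\cdot)$ in its last argument to be at most $c_i$. Hence there exist $\mathcal{F}_{i-1}$-measurable random variables $A_i\le B_i$ with $B_i-A_i\le c_i$ and $A_i\le V_i\le B_i$ almost surely. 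Making this rigorous is the main obstacle: one must check measurability of $g_i$ and of the bounds (for instance as an essential infimum/supremum over the last coordinate) and verify that the substitution argument behind the bounded-differences condition really produces a uniform bound on the oscillation of the conditional expectation --- independence is precisely what lets one integrate out the future coordinates without resorting to a coupling.

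Third I would conclude via a conditional Hoeffding lemma and a peeling argument. For any random variable $W$ with $\EE[W\mid\mathcal{F}_{i-1}]=0$ and taking values in an interval of length $c_i$, one has $\EE[e^{\lambda W}\mid\mathcal{F}_{i-1}]\le e^{\lambda^2 c_i^2/8}$ for every $\lambda>0$. Applying this to $V_n,V_{n-1},\dots$ successively and using the tower property,
\[
\EE\big[e^{\lambda(Z-\EE(Z))}\big]=\EE\Big[e^{\lambda\sum_{i=1}^{n-1}V_i}\,\EE\big[e^{\lambda V_n}\mid\mathcal{F}_{n-1}\big]\Big]\le e^{\lambda^2 c_n^2/8}\,\EE\big[e^{\lambda\sum_{i=1}^{n-1}V_i}\big]\le\cdots\le\exp\Big(\frac{\lambda^2}{8}\sum_{i=1}^n c_i^2\Big).
\]
Markov's inequality then gives $\PP(Z-\EE(Z)\ge t)\le e^{-\lambda t}\exp\big(\frac{\lambda^2}{8}\sum_{i=1}^n c_i^2\big)$ for all $\lambda>0$, and optimizing with $\lambda=4t/\sum_{i=1}^n c_i^2$ yields the stated bound $\exp\big(-2t^2/\sum_{i=1}^n c_i^2\big)$. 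Everything but the second step is routine.
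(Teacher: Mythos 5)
Your argument is the classical Doob-martingale/Azuma--Hoeffding proof of the bounded-differences inequality, and it is correct as sketched: the decomposition $Z-\EE(Z)=\sum_i V_i$, the use of independence to confine each increment $V_i$ to an $\mathcal{F}_{i-1}$-measurable interval of length $c_i$, the conditional Hoeffding lemma, and the Chernoff optimization at $\lambda=4t/\sum_l c_l^2$ all fit together as stated. The paper itself gives no proof of this proposition --- it is quoted verbatim from Theorem~3.1 of \citet{McDiarmid98conc} --- so there is nothing to compare against beyond noting that your route is the standard one found in that reference.
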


The following lemma guarantees that Fact \ref{fact:mask-property} is verified for \Kfold\ \CV.
\begin{lemma}
	\label{lemma:CV-scheme-property}
	For the $K$-fold procedure,\,the training samples 
	$T_{1:K}$ and validation samples $V_{1:K}$ satisfy Fact \ref{fact:mask-property} i.e 
	\begin{equation*}
	\frac{1}{K}\sum_{j=1}^{K}\ffrac{\indic{l \in T_j} }{n_{T}}= \frac{1}{K}\sum_{j=1}^{K}\ffrac{\indic{l \in V_j} }{n_{V}}=\frac{1}{n} \quad \forall l \in \llbracket1,n\rrbracket.
	\end{equation*}	
\end{lemma}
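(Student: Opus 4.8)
The plan is to unwind the definition of the $K$-fold scheme and reduce the claim to an elementary counting argument. Recall that in $K$-fold cross validation one fixes a partition of $[n]$ into $K$ blocks (the folds) $B_1,\dots,B_K$, each of cardinality $n/K$ — this is precisely where the standing divisibility hypothesis $K\mid n$ is used — and then sets $V_j=B_j$ and $T_j=[n]\setminus B_j$. In particular the validation sets $V_1,\dots,V_K$ are pairwise disjoint with union $[n]$, so every index $l\in[n]$ belongs to exactly one validation fold. I would state this interpretation of "the $K$-fold procedure" explicitly at the outset, since everything else is bookkeeping.

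First I would fix $l\in[n]$ and let $j_0=j_0(l)$ be the unique index with $l\in V_{j_0}$. Then $\indic{l\in V_j}=\indic{j=j_0}$, hence $\sum_{j=1}^{K}\indic{l\in V_j}=1$; since $n_V=n/K$ does not depend on $j$ it comes out of the sum, giving $\frac{1}{K}\sum_{j=1}^{K}\frac{\indic{l\in V_j}}{n_V}=\frac{1}{K\,n_V}=\frac{1}{n}$, which is the middle equality. In passing this also proves the two displayed identities in Fact~\ref{fact:mask-property} (all validation folds share the common cardinality $n_V=n/K$, and $\frac1K\sum_j\indic{l\in V_j}=n_V/n$).

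Next, since $T_j=[n]\setminus V_j$ we have $\indic{l\in T_j}=1-\indic{l\in V_j}$, so $\sum_{j=1}^{K}\indic{l\in T_j}=K-1$. The training sets all have the common size $n_T=n-n_V=(K-1)n/K$, again independent of $j$, so $\frac{1}{K}\sum_{j=1}^{K}\frac{\indic{l\in T_j}}{n_T}=\frac{K-1}{K\,n_T}=\frac{K-1}{K}\cdot\frac{K}{(K-1)n}=\frac{1}{n}$, which is the left equality, and the lemma follows.

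There is essentially no obstacle here: the only point requiring care is to make the meaning of "the $K$-fold procedure" precise (a fixed partition of $[n]$ into equal-size blocks), after which the result is pure counting; the divisibility assumption $K\mid n$ invoked in the main text is exactly what ensures that all the $V_j$ (and hence all the $T_j$) have the same cardinality, which is what allows $n_V$ and $n_T$ to be pulled out of the sums.
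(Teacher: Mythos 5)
Your proof is correct and follows essentially the same route as the paper's: both reduce the claim to the observation that each index $l$ lies in exactly one validation fold (hence in exactly $K-1$ training folds) because the $V_j$ partition $[n]$ into equal-size blocks, and then pull the constant cardinalities $n_V=n/K$ and $n_T=(K-1)n/K$ out of the sums. Nothing to add.
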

\begin{proof}

	The validation sets  of \Kfold\ verify the following property
	\begin{equation}\label{eq:K-fold-partition}
	\bigcup\limits_{j=1}^K V_j=\llbracket1,n\rrbracket \: and \: V_j\bigcap V_k=\varnothing\:, \:\forall j\neq k \in \llbracket1,K\rrbracket.
	\end{equation}
	
	Under the hypothesis that $card(V_j)=n_{V}$  for all the validation sets, \eqref{eq:K-fold-partition} implies\,that
	\begin{equation}
	\label{}
	n=\sum_{j=1}^{K}card(V_j)=Kn_{V}. 
	\end{equation}

	Furthermore,\,under \eqref{eq:K-fold-partition},\,an index $l\in \llbracket 1,n \rrbracket$ belongs to a unique validation test $V_j'$ and to all the train sets $T_j=V_j^c$ with $j\neq j'$.\,Hence,\,we have
	
	\begin{equation*}
	\begin{cases}
	\sum_{j=1}^{K}\indic{l \in T_j} = K-1, \\
	\sum_{j=1}^{K}\indic{l \in V_j} = 1.
	\end{cases}  
	\end{equation*}
	Using \eqref{eq:K-fold-partition} and the fact that $n_{T}=n-n_{V}=(K-1)n_{V}$ yields the desired result.

\end{proof}

\section{INTERMEDIATE RESULTS}\label{sec:intermediate}

First we provide a concentration bound for $\DevCV$ which has been defined in \eqref{def:dev_CV} as
$$\DevCV= \rCV\left[\alg,V_{1:K}\right]-  \risk_{CV}\left[\alg,V_{1:K}\right].$$

\begin{lemma}\label{lemma:Dev-CV-UB}
 Suppose that Assumption~\ref{assum:stability-setting} holds. Then, we have
	\begin{equation*}
	\PP(\abs{\DevCV} \geq t) \leq \exp{\left(\ffrac{-2nt^2}{(4\beta_{n_{T}}n_{T}+2L)^2} \right)}.
	\end{equation*}

\end{lemma}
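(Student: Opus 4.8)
The plan is to apply McDiarmid's inequality (Proposition~\ref{prop:Mcdiarmids-ineq}) to the random variable $Z=\DevCV$, regarded as a function of $\DD=(O_1,\dots,O_n)$; for this I must verify that $Z$ is centred and identify the bounded-differences constants.

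First I would note that, since $\alg$ is deterministic, $\risk\big[\alg(T_j)\big]=\EE[\loss(\alg(T_j),O)\mid\DD_{T_j}]$ is a measurable function of $\DD$, hence so is $\DevCV=\tfrac1K\sum_{j=1}^K\big(\ER\big[\alg(T_j),V_j\big]-\risk\big[\alg(T_j)\big]\big)$. Moreover, for every $j$ and every $i\in V_j$ the observation $O_i$ is independent of $\DD_{T_j}$, so $\EE[\loss(\alg(T_j),O_i)\mid\DD_{T_j}]=\risk\big[\alg(T_j)\big]$; averaging over $i\in V_j$ and taking expectations gives $\EE\big[\ER\big[\alg(T_j),V_j\big]\big]=\EE\big[\risk\big[\alg(T_j)\big]\big]$, whence $\EE[\DevCV]=0$.

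Next I would bound the bounded-differences coefficients. Fix $l\in[n]$ and let $\DD^l$ be $\DD$ with $O_l$ replaced by an arbitrary $O'\in\mathcal Z$. By Fact~\ref{fact:mask-property} the index $l$ lies in exactly one validation set, say $V_{j_0}$, and in the $K-1$ training sets $T_j$ with $j\neq j_0$. For the fold $j_0$: since $l\notin T_{j_0}$, both $\alg(T_{j_0})$ and $\risk\big[\alg(T_{j_0})\big]$ are unchanged, while $\ER\big[\alg(T_{j_0}),V_{j_0}\big]$ changes only in the single summand indexed by $l$, so it moves by at most $2L/n_V$ by Assumption~\ref{assum:stability-setting}. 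For a fold $j\neq j_0$: replacing $O_l$ turns $\alg(T_j)$ into $\alg(T_j^l)$; writing the replacement as a removal of $O_l$ followed by an insertion of $O'$ and invoking Definition~\ref{def:unif-stable} twice (once on the $n_T$-point set $T_j$, once on the $n_T$-point set obtained after insertion) gives $|\loss(\alg(T_j),O)-\loss(\alg(T_j^l),O)|\le 2\beta_{n_T}$ for every $O$. This controls by $2\beta_{n_T}$ both the change of $\ER\big[\alg(T_j),V_j\big]$, whose observations $O_i$ ($i\in V_j$) are untouched, and, after taking conditional expectations, the change of $\risk\big[\alg(T_j)\big]$; so the $j$-th summand of $\DevCV$ moves by at most $4\beta_{n_T}$. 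Summing the $K$ folds with weight $1/K$ and substituting $n_V=n/K$ and $n_T=(K-1)n/K$ (equivalently $K-1=Kn_T/n$) yields the $l$-independent bound $c_l=\tfrac1K\big(2L/n_V+4(K-1)\beta_{n_T}\big)=(4\beta_{n_T}n_T+2L)/n$.

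Finally I would plug $\sum_{l=1}^n c_l^2=(4\beta_{n_T}n_T+2L)^2/n$ into Proposition~\ref{prop:Mcdiarmids-ineq}, obtaining $\PP(\DevCV\ge t)\le\exp\big(-2nt^2/(4\beta_{n_T}n_T+2L)^2\big)$; running the identical argument for $-\DevCV$ and using $\EE[\DevCV]=0$ then gives the two-sided estimate. I expect the bounded-differences step to be the main obstacle: one has to track precisely which folds are perturbed when a single data point changes, and to combine two applications of the remove-one stability bound of Definition~\ref{def:unif-stable} to control the replace-one perturbation of the trained predictors $\alg(T_j)$.
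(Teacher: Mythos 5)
Your proposal is correct and follows essentially the same route as the paper: McDiarmid's inequality applied to $\DevCV$, with the bounded-difference constant $c_l=(4\beta_{n_T}n_T+2L)/n$ obtained by distinguishing the single fold where $l$ falls in the validation set (contributing $2L/n$ via boundedness) from the $K-1$ folds where $l$ is a training point (contributing $4\beta_{n_T}n_T/n$ via two applications of the removal-stability bound), exactly as in the paper's computation using Fact~\ref{fact:mask-property}. Note that your two-sided argument, like the paper's own proof, yields a factor $2$ in front of the exponential that is absent from the lemma's statement; this discrepancy is inherited from the paper, not introduced by you.
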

\begin{proof} 
	Let  $O'\in \mathcal{X}$ be  an independent copy of $O_1,O_2,\dots,O_n$, for any $l\in \llbracket1,n\rrbracket$ define 
	\[\Delta_l(\DD,O')=\big\lvert \DevCV(\DD)-\DevCV(\DD^l)\big\rvert, \]
	where $\DD^{l}$ is obtained by replacing the $l$'th element of $\DD$ by $O'$. Now we derive an upper bound on $\PP(\abs{\DevCV} \geq t)$ using Proposition \ref{prop:Mcdiarmids-ineq}. Namely, we will bound the maximum deviation of $\Delta_l$ by $\Delta_l \leq  \ffrac{4\beta_{n_{T}} n_{T}}{n}+\ffrac{L}{n}$. To do so, write 
	\begin{align*}
	\DevCV&=\rCV\left[\alg,V_{1:K}\right]-  \risk_{CV}\left[\alg,V_{1:K}\right]\\
	&=\frac{1}{Kn_{val}}\sum_{j=1}^K\sum_{i \in V_j} \bigg(\loss(\alg (T_j),O_i)-\EE_{O}\big[\loss(\alg(T_j),O) \mid \DD_{T_j})\big]\bigg)\\
	&=\frac{1}{Kn_{val}}\sum_{j=1}^K\sum_{i \in V_j}h(\alg(T_j),O_i),
	\end{align*}
	where the last is used to define $h$. For a training set $T_j\subset [n]$, let $\alg(T_{j,l}) $ denote the algorithm $\alg$ trained on the sequence $\DD_{T_j,l}=\{O_i \in \DD^{l} \mid i \in T_j\}$. Note that, for all $ \DD_{T_j},o \in \mathcal{Z}^{n_{T}}\times \mathcal{Z}$ one has 
	\begin{equation}
	\label{"train_set_dev"}
	\begin{cases}
	\loss\big(\alg(T_j),o\big)=\loss\big(\alg(T_{j,l}),o\big)\: \text{if} \: l \notin T_j\,,\\
	\bigg \lvert \loss\big(\alg(T_j),o\big)-\loss\big(\alg(T_{j,l}),o\big) \bigg\rvert \leq 2\beta_{n_{T}} \: \text{otherwise}.
	\end{cases}
	\end{equation}
	The first equation follows from the fact that $\DD_{T_j,l}=\DD_{T_j}$ if $l\notin T_j$, indeed, if the training set $\DD_{T_j}$ doesn't contain the index $l$ then changing the $l$'th element of $\DD$ won't affect $\DD_{T_j}$.\,The second inequality is obtained using the \emph{uniform} stability of $\alg$. Furthermore, using Equation \ref{"train_set_dev"} write
	\begin{equation}
	\label{"expectation_train_set_dev"}
	\begin{cases}
	\EE\bigg[\loss\big(\alg(T_j),O\big)\mid \DD_{T_{j}}\bigg]=\EE\bigg[\loss\big(\alg(T_{j,l}),O\big)\mid \DD_{T_{j,l}}\bigg]\: \text{if} \: l \notin T_j\\
	\bigg \lvert \EE\big[\loss\big(\alg(T_j),O\big)\mid \DD_{T_j}\big]- \EE\big[\loss\big(\alg(T_{j,l}),O\big)\mid \DD_{T_{j,l}}\big] \bigg\rvert \leq 2\beta_{n_{T}} , \: \text{otherwise}.
	\end{cases}
	\end{equation}
	
	Combining \eqref{"expectation_train_set_dev"} and \eqref{"train_set_dev"} gives
	\begin{equation}
	\label{eq:key-DCV-Mcdiarmid}
	\begin{cases}
	\abs{\un{l \in T_j}(h(\alg(T_j),O_i))-h(\alg(T_{j,l}),O_i))} \leq 4\beta_{n_{T}} \,,\\
	\abs{\un{l \notin T_j}(h(\alg(T_j),O_i))-h(\alg(T_{j,l}),O_i)) }\leq 2L\un{i=l}, 	
	\end{cases}
	\end{equation}
	so that
	\begin{align*}
	\abs{\Delta_l(\DD,O')}&\leq  \frac{1}{Kn_{val}}\sum_{j=1}^K\sum_{i \in V_j}\abs{h(\alg(T_j),O_i))-h(\alg(T_{j,l}),O_i))} \\
	(\text{From the fact that }[n] \backslash T_j =V_j)\ &=\frac{1}{Kn_{val}}\sum_{j=1}^K\sum_{i \in V_j}\abs{h(\alg(T_j),O_i))-h(\alg(T_{j,l}),O_i))}\left(\un{l\in T_j}+\un{l\in V_j}\right)\\
	(\text{By Equation \ref{eq:key-DCV-Mcdiarmid}} )\  &\leq \ffrac{4\beta_{n_{T}}}{K}\sum_{j=1}^{K}\un{l \in T_j} +\ffrac{2L}{n_{val}K}\sum_{j=1}^{K}\un{l \in V_j}\\
	(\text{By Fact \ref{fact:mask-property}} )\   &\leq \ffrac{4\beta_{n_{T}} n_{T}}{n}+\ffrac{2L}{n}.
	\end{align*}
	Using Mcdiarmid's inequality ( Proposition \ref{prop:Mcdiarmids-ineq}) gives
	\begin{equation*}
	\PP(\DevCV \geq t) \leq \exp{\left(\ffrac{-2nt^2}{(4\beta_{n_{T}}n_{T}+2L)^2} \right)}.
	\end{equation*}
	Symmetrically, one has,
	\begin{equation*}
		\PP(\DevCV \leq -t) \leq \exp{\left(\ffrac{-2nt^2}{(4\beta_{n_{T}}n_{T}+2L)^2} \right)}.
	\end{equation*}
	Thus,
	$$ \PP(\abs{\DevCV} \geq t) \leq 2\exp{\left(\ffrac{-2nt^2}{(4\beta_{n_{T}}n_{T}+2L)^2} \right)}, $$
	which is the desired result. 
\end{proof}

In the next lemma we obtain  a similar concentration bound for the term $\DevT$ defined in Eq~\ref{def:dev_train} as
$$	\DevT =\ffrac{1}{K}\sum_{j=1}^{K} \left(\ER\left[\alg(T_j),T_j\right]-\risk\big[\alg(T_j)\big]\right).$$

\begin{lemma}\label{lemma:Dev-train-UB}
	Suppose that Assumption~\ref{assum:stability-setting} holds. Then, one has,
	\begin{equation*}
		\PP(\abs{\DevT}   \geq t + 2\beta_{n_{T}}  ) \leq \exp{\left(\ffrac{-2nt^2}{(4\beta_{n_{T}}n_{T}+2L)^2} \right)},
	\end{equation*}
	where  $L$ is the upper bound of the cost function defined in Assumption~\ref{assum:stability-setting}.
\end{lemma}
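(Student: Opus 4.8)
The plan is to mimic the structure of the proof of Lemma~\ref{lemma:Dev-CV-UB}, since $\DevT$ has almost the same form as $\DevCV$ except that each algorithm $\alg(T_j)$ is evaluated on its own training set $T_j$ rather than on the disjoint validation set $V_j$. This overlap between the point that is resampled and the data used to train the predictor is the reason why a bias term $2\beta_{n_T}$ appears: $\EE[\DevT]$ is no longer zero, because the empirical risk on the training set is a biased estimate of the generalization risk for a stable learner. So the two ingredients are (i) a bound on $|\EE[\DevT]|$ and (ii) a bounded-differences (McDiarmid) argument for the centered quantity $\DevT - \EE[\DevT]$.

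For step (i), I would write
$$\EE[\DevT] = \frac{1}{K}\sum_{j=1}^K \EE\Big[\frac{1}{n_T}\sum_{i\in T_j}\big(\loss(\alg(T_j),O_i) - \EE_O[\loss(\alg(T_j),O)\mid \DD_{T_j}]\big)\Big],$$
and for each fixed $j$ and each $i\in T_j$ use the classical stability trick: replacing $O_i$ by an independent copy $O'$ gives a predictor $\alg(T_j^i)$ whose loss on a fresh point differs from that of $\alg(T_j)$ by at most $2\beta_{n_T}$ (by the triangle inequality applied twice to Definition~\ref{def:unif-stable}, via $\alg(T_j^{\setminus i})$), while $O_i$ is independent of $\DD_{T_j^i}$ so that $\EE[\loss(\alg(T_j^i),O_i)] = \EE[\EE_O[\loss(\alg(T_j^i),O)\mid\DD_{T_j^i}]]$. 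Subtracting and taking absolute values yields $|\EE[\DevT]| \le 2\beta_{n_T}$.

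For step (ii), I would set $Z = \DevT$ viewed as a function of $\DD$, and for each $l\in[n]$ bound the effect of replacing $O_l$ by $O'$. This is essentially identical to the computation in Lemma~\ref{lemma:Dev-CV-UB}: split the sum over folds $j$ into those with $l\in T_j$ (where changing $O_l$ both perturbs the trained predictor, contributing $\le 2\beta_{n_T}$ to each of the empirical and the population risk terms, hence $\le 4\beta_{n_T}$ per such fold) and those with $l\notin T_j$ (where only the empirical sum can be affected, and only if $l$ is among the evaluation indices — here $l\in T_j$ would be needed, so in fact these folds contribute nothing, but in the general balanced-set bookkeeping one keeps a $2L\,\un{i=l}$ type term). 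Using Fact~\ref{fact:mask-property} (the $\frac1K\sum_j \un{l\in T_j} = n_T/n$ identity) the bounded-difference constant is $c_l \le (4\beta_{n_T}n_T + 2L)/n$, so McDiarmid gives $\PP(\DevT - \EE[\DevT]\ge t) \le \exp\!\big(-2nt^2/(4\beta_{n_T}n_T+2L)^2\big)$. Combining with (i) via $\{\DevT \ge t + 2\beta_{n_T}\} \subset \{\DevT - \EE[\DevT] \ge t\}$ yields the claim.

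The main obstacle is the careful bookkeeping in step (ii): one must be precise about which folds are affected when $O_l$ changes and make sure the per-fold perturbation bound of $4\beta_{n_T}$ (two $2\beta_{n_T}$ contributions, one from the empirical term and one from the conditional-expectation term) is correctly aggregated using the mask identity, exactly as in Lemma~\ref{lemma:Dev-CV-UB}; the subtlety specific to this lemma is that the evaluation index set equals the training set rather than being disjoint from it, which is precisely what forces the non-centered bias $2\beta_{n_T}$ and must be handled separately in step (i) rather than being absorbed into the concentration bound. A one-sided statement suffices here (no factor $2$ on the right-hand side), so I would only carry out the upper-tail direction.
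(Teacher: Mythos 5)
Your proposal is correct and follows essentially the same route as the paper: McDiarmid's inequality applied to the centered variable with bounded-difference constant $(4\beta_{n_T}n_T+2L)/n$ via the mask identity of Fact~\ref{fact:mask-property}, plus a separate bound $|\EE[\DevT]|\leq 2\beta_{n_T}$ (the paper invokes Lemma~7 of \cite{bousquet2002stability} for the latter, which is exactly the replace-one argument you sketch). One small imprecision: the $2L/n$ contribution to the bounded-difference constant does not come from the folds with $l\notin T_j$ (which indeed contribute nothing here), but from the folds with $l\in T_j$ at the single evaluation index $i=l$, where the evaluated point itself is swapped for $O'$ and the per-term perturbation is $4\beta_{n_T}+2L$ rather than $4\beta_{n_T}$; with that bookkeeping your stated constant is recovered.
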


	\begin{proof}
Though the proof bears resemblance with the one of Lemma \ref{lemma:Dev-CV-UB}, we provide the full details details for compeleteness.  We use McDiarmid's inequality with $f(\mathcal D) = \DevT  $, that is,
\begin{align*}
f(\mathcal D) &= \ffrac{1}{K}\sum_{j=1}^{K} \left( \ER\left[\alg(T_j),T_j\right] - \risk\big[\alg(T_j)\big]\right) \\
f(\mathcal D^l)  &= \ffrac{1}{K}\sum_{j=1}^{K}  \left( \ER\left[\alg(T_{j,l}),T_{j,l}\right] - \risk\big[\alg(T_{j,l} )\big]\right).
\end{align*}
Since for $l\notin T_j $, $T_j = T_{j,l}$, we find that
\begin{align*}
	\abs{\Delta_l(\DD,O')}	& \leq   \frac{1}{Kn_{T}}  \ \sum_{j=1}^K \indic{l \in T_j}  \left( \ER\left[\alg(T_j),T_j\right] -  \ER\left[\alg(T_{j,l} ),T_{j,l} \right]   + 
	\risk\big[\alg(T_{j,l} )\big] - \risk\big[\alg(T_j)\big] \right)\\
	& \leq   \frac{1}{Kn_{T}}  \ \sum_{j=1}^K \indic{l \in T_j}  \sum_{i \in T_j}\abs{   h (\alg (T_j),O_i) - h(\alg (T_{j,l} ), O_i^l)  } \,,
	\end{align*}
	with $ h(\alg(T), o  ) = \loss(\alg (T), o) - \EE_{O}\big[\loss(\alg(T),O) \mid \DD_{T } \big] $ and $   (O_i^l)_{i=1,\ldots, n }$ is the same as $(O_i)_{i=1,\ldots, n }$ except the $l$-th element, $O_l$, which is replaced by $O'$. Whenever $ l \in T_j$, it holds that 
	\begin{align*}
	 | \loss (\alg (T_j),O_i) -  \loss (\alg (T_{j,l} ), O_i^l) |  &=  |\loss (\alg (T_j),O_i) - \loss (\alg (T_{j,l} ), O_i) | \indic {i\neq l} +|\loss (\alg (T_j),O_l) - \loss (\alg (T_{j,l} ), O') | \indic {i =  l} \\
	&\leq   2\beta _{n_T}  + L\indic {i =  l}  \,,
	\end{align*}
and that 
\begin{align*} 
 \EE\big[ | \loss(\alg(T_j),O) -\loss(\alg(T_{j,l}),O)  | \mid \DD_{T }, O' \big] 	&\leq  2\beta _{n_T} .
	\end{align*}	
It follows from the definition of $h$ that
\begin{align*}
	 | h (\alg (T_j),O_i) - h(\alg (T_{j,l} ), O_i^l) |  	&\leq   4\beta _{n_T}  + 2L\indic {i =  l} .
	\end{align*}	
		 By using 
		 the identity $\ffrac{1}{K}\sum_{j=1}^{K}  {\indic{l \in T_j} }=\ffrac{n_{T}}{n}$ we get
		 \[\Delta_l(\DD,O')  \leq \ffrac{4\beta_{n_{T}} n_{T}}{n}+\ffrac{2L}{n}. \]
Thus by Mcdiarmid's (Proposition \ref{prop:Mcdiarmids-ineq}), we obtain, for $Z = f(\mathcal D) - \mathbb E [  f(\mathcal D) ] $, that
		\begin{equation*}
			\PP( Z \geq t) \leq \exp{\left(\ffrac{-2nt^2}{(4\beta_{n_{T}}n_{T}+2L)^2} \right)}.
		\end{equation*}
		Symmetrically,  the event $ - Z \geq t$ is subject to the same probability bound. It follows that 
		\begin{equation}\label{ub_proba_useful}
			\PP( | Z |  \geq  t) \leq 2\exp{\left(\ffrac{-2nt^2}{(4\beta_{n_{T}}n_{T}+2L)^2} \right)}\,.
		\end{equation}
		 To derive an upper bound for $\EE[\DevT]$, we use the fact that all the training sets have the same length so that
		$$\EE(\DevT) =\EE\left[\risk\left[\alg(T_1)\right]-\ER\left[\alg(T_1),T_1\right]\right]. $$
		Then, we use Lemma 7 from \citet{bousquet2002stability} ensuring that 
		\begin{equation*}
		\forall T\subset [n] \quad , \quad	\EE\left[\risk\left[\alg(T)\right]-\ER\left[\alg(T),T\right]\right]= \EE\left[\loss\left(\alg(T),O'\right)-\loss\left(\alg(T^l),O'\right)\right].
		\end{equation*}
		Where  $\alg(T^{l})$ is the learning rule $\alg$ trained on the sample $\DD_{T}\setminus \{O_l\}\cup \{O'\}$.
		Replacing the left side term by $\EE[\DevT]$ and using Definition \ref{def:unif-stable} gives
		\begin{equation*}
		\abs{\EE[ f(\mathcal D) ]}  = \abs{\EE[\DevT]} \leq 2\beta_{n_{T}}.
		\end{equation*}
Since $ |f(\mathcal D) | \leq  | Z| + 2\beta_{n_{T}} $, we simply use \eqref{ub_proba_useful} to reach the conclusion. 
	\end{proof}
\begin{remark}\label{remark:bousquet-UB-gen}
	Applying Lemma \ref{lemma:Dev-train-UB} with $T=[n]$ and $K=1$ gives the following probability upper bound
		$$\PP\left(\big|\ER\big[\alg([n]),[n]\big]-\risk\big[\alg([n])\big]\big| \geq t + 2\beta_n \right) \leq 2\exp{\left(\ffrac{-2nt^2}{(2L+4\beta_{n}n)^2} \right)}.$$
	Thus we retrieve the bound of Theorem 12 in \citet{bousquet2002stability}.
\end{remark}

\section{DETAILED PROOFS}\label{sec:detailedProofs}

\subsection{Proof of Theorem \ref{theo:CV-stable-bound}}\label{proof:CV-UB}
We proceed as described in the sketch of proof. Using Equation \ref{ineq:error-decomp}, write 
\begin{align*}
\left|\rCV\left[\alg,V_{1:K}\right]-\TR\big[\alg([n])\big]\right| \leq \abs{\DevCV}+\abs{\BiasCV}.
\end{align*}	
It remains to combine Lemma~\ref{lemma:Dev-CV-UB} with Fact~\ref{fact:Bias-UB} to obtain the desired result.
\subsection{Proof of Theorem \ref{theo:K-fold-LB}}\label{sec:proof-LB-RERM}

	Consider the  regression problem  for a random pair  $O=(X,Y)$ consisting of a  covariate  $X=\ffrac{\epsilon}{M}$ where $\epsilon$ is a Rademacher variable and  a response $Y=M\sign{X}$ for some $M>1$. Namely $\epsilon\in\left\{+1,-1\right\}$ and $P(\epsilon=\pm 1)=1/2$.\\
	
	Now, let $n\leq e^M$  and define the algorithm $\alg$ as a regularized empirical risk minimizer, more precisely $\alg(\DD,X)=\hat \beta\left(\DD\right) X$ with
	
	$$\hat \beta\left(\DD\right)=\argmin_{\beta\in \rset}\ffrac{1}{n}\sum_{i=1}^{n}(Y_i-\beta X_i)^2+\lambda_n\lvert\beta\rvert^2, $$
	
	and $\lambda_n=\ffrac{1}{\log\left(n\right)}-\ffrac{1}{M^2}$. 
	
	\paragraph{Algorithmic Stability} It's easy to check that 
	$\hat \beta=\ffrac{\overline{X_nY_n}}{\overline{(X_n)^2}+\lambda_n
	}$ where $\overline{X_nY_n}=\frac{1}{n}\sum_{i=1}^{n}X_iY_i$. Moreover, using the fact that $X_iY_i=1$, one obtains
	
	$$\hat \beta\left(\DD\right)=\ffrac{1}{1/M^2+\lambda_n}=\log(n).$$
	
	On the other hand, write
	\begin{align*}
	\loss\big(\alg\left(\DD\right),O\big)-\loss\big(\alg(\DD^{\backslash i}),O\big)&=(\beta_{n-1}-\beta_{n})X\left(2Y-(\beta_n+\beta_{n-1})X\right)\\
	&=\left(\beta_{n-1}-\beta_{n}\right)\left(2-\ffrac{(\beta_n+\beta_{n-1})}{M^2}\right),\\
	&=\left(\log(n-1)-\log(n)\right)\left(2-\ffrac{\log(n)+\log(n-1)}{M^2}\right),
	\end{align*}
	where the second line follows from the fact that $XY=1$ and the last  follows by replacing $\beta$ and $\lambda$ by their expression.\\
	To conclude this part, we use the fact that $\log(1+x)\leq x$ for all $x\leq 1$, to obtain
	$$\big\lvert\loss\big(\alg\left(\DD\right),O\big)-\loss\big(\alg(\DD^{\backslash i}),O\big)\big\rvert \leq \ffrac{2}{n}. $$
	\paragraph{Bias Lower Bound}
	Using the same equation as before we have

	\begin{align*}
	\loss\big(\alg\left(\DD\right),O\big)-\loss\big(\alg(\DD_T),O\big)&=\left(\log(n_T)-\log(n)\right)\left(2-\ffrac{\log(n)+\log(n_T)}{M^2}\right)\\
	&=\log(\frac{K-1}{K})\left(2-\ffrac{\log(n)+\log(n_T)}{M^2}\right).
	\end{align*}	
	
	Thus, since $n_T\leq n \leq e^M$ we obtain 
	
	$$  \risk\left[\alg\left(\DD_T \right)\right]-\risk\left[\alg\left(\DD\right)\right]\geq 2\log(\frac{K}{K-1})\left(1-\ffrac{1}{M}\right).$$

It remains to notice that
\begin{align*}
	\EE\left[\big|\rCV\left[\alg,V_{1:K}\right]-\risk\left[\alg[n]\right] \big|\right] &
	\geq\left|\EE\left[ \rCV\left[\alg,V_{1:K}\right]-\risk\left[\alg[n]\right] \right] \right|\\
	&=\risk\left[\alg\left(\DD_T \right)\right]-\risk\left[\alg\left(\DD\right)\right],
\end{align*}

and the proof is complete.
\subsection{Proof of Theorem \ref{theo:CVC-UB-det}}\label{proof:CVC-UB}
We proceed as described in the sketch of  proof. First remind the error decomposition \ref{ineq:error-decom-cvc}
$$ \rCVC\left[\alg,V_{1:K}\right]-\risk\left[\alg([n])\right] = \DevAll + \ffrac{n_T}{n}(\DevCV-\DevT), $$ 
where $\DevCV$, $\DevT$ and $\DevAll$ are defined in \ref{def:dev_CV}, \ref{def:dev_train} and \ref{def:dev_all} respectively. Since $n_T\leq n$, using the triangular inequality yields
$$ \big|\rCVC\left[\alg,V_{1:K}\right]-\risk\left[\alg[n]\right] \big| \leq \abs{\DevAll} + \abs{\DevCV} +\abs{\DevT} .$$
Combining lemma \ref{lemma:Dev-CV-UB} and \ref{lemma:Dev-train-UB} regarding $\DevCV$ and $\DevT$ with Remark \ref{remark:bousquet-UB-gen} regarding $\DevAll$, one obtains 
\begin{align}
\notag	&\PP\left(\big|\rCVC\left[\alg,V_{1:K}\right]-\risk\left[\alg[n]\right]\big|   \geq t + 2(\beta_{n_{T}}+\beta_{n})\right)  \\
\notag	& \leq  \PP\left(   \abs{\DevCV}    \geq t/3 \right) + 
\PP\left(  \abs{\DevAll}     \geq t/3 + 2 \beta_{n}\right) +
\PP\left(   \abs{\DevT}     \geq t/3 + 2\beta_{n_{T}} \right)\\
\label{eq:CVC-UB-expon-form}	&\leq 6\exp{\left(\ffrac{-2nt ^2 }{ 9(4\beta_{n_{T}}n_T+L)^2} \right)}.
\end{align}
By inverting, and using the assumption $\beta_t\leq \ffrac{\lambda}{t}$ one gets, with probability $1-6\delta$,
$$\abs{\rCVC\left[\alg,V_{1:K}\right]-\risk\left[\alg[n]\right]} \leq 2\lambda(\ffrac{1}{n}+\ffrac{1}{n_T}) +3(4\lambda+L)\sqrt{\ffrac{\log(1/\delta)}{2n}}, $$
which is the desired result.

\subsection{Proof of Theorem \ref{theo:model-selec-CVC}}
The proof of Theorem \ref{theo:model-selec-CVC} relies on the following proposition,
\begin{proposition}\label{prop:uniform-CVC-UB}
	Let $(\alg^{(m)})_{m\in \mathcal{M}}$ be a family of algorithms where each learner $\alg^{(m)}$ is $(\beta_{m,t})_{1\leq i \leq n}$ uniform stable with respect to loss function $0\leq \loss(g,O)\leq L$. Additionally, assume that, $\abs{\mathcal{M}}< \infty$ and that
	$$ \forall m \in \mathcal{M} \:;\: \beta_{m,t} \leq \ffrac{M}{t},$$
	for some $M>0$. Then one has, with probability at least $1-6\delta$,
	\begin{equation*}
		\sup_{m\in \mathcal{M}}\bigg|\ER^{corr}_{\Kfold}\left[\alg^{(m)},V_{1:K}\right]-\TR\big[\alg^{(m)}([n])\big]\bigg|\leq  \ffrac{ 6M }{n} +4(M+L)\sqrt{\ffrac{\log(\abs{\mathcal{M}}/\delta)}{n}}.
	\end{equation*}
\end{proposition}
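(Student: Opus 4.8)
The plan is to lift the single-model concentration inequality established inside the proof of Theorem~\ref{theo:CVC-UB-det} to a bound that holds uniformly over $\mathcal{M}$, by a plain union bound; this is cheap precisely because that inequality has exponential (sub-Gaussian) tails in the deviation.

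First I would fix $m\in\mathcal{M}$ and check that $\alg^{(m)}$ satisfies the hypotheses of Theorem~\ref{theo:CVC-UB-det}: it is $(\beta_{m,t})_{1\le t\le n}$ uniformly stable and the loss takes values in $[0,L]$, so Assumption~\ref{assum:stability-setting} holds for $\alg^{(m)}$ with stability sequence $\beta_{m,\cdot}$ and constant $L$. Hence the exponential-tail estimate \eqref{eq:CVC-UB-expon-form} obtained in that proof --- itself a consequence of Lemmas~\ref{lemma:Dev-CV-UB} and~\ref{lemma:Dev-train-UB} and Remark~\ref{remark:bousquet-UB-gen} through McDiarmid's inequality --- applies to $\alg^{(m)}$: for every $t\ge 0$,
\begin{equation*}
\PP\!\left(\big|\rCVC[\alg^{(m)},V_{1:K}]-\risk[\alg^{(m)}([n])]\big|\ge t+2(\beta_{m,n_T}+\beta_{m,n})\right)\le 6\exp\!\left(\frac{-2nt^2}{9\left(4\beta_{m,n_T}n_T+2L\right)^2}\right).
\end{equation*}

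Next I would feed in the hypothesis $\beta_{m,t}\le M/t$, which holds uniformly in $m$. On one hand $\beta_{m,n_T}n_T\le M$, so the denominator of the exponent is at most $9(4M+2L)^2\le C_1(M+L)^2$ for a universal constant $C_1$, independently of $m$; on the other hand, since $n_T=\frac{K-1}{K}n\ge \frac n2$ for $K\ge 2$, the additive shift obeys $2(\beta_{m,n_T}+\beta_{m,n})\le 2M(\frac1{n_T}+\frac1n)\le \frac{6M}{n}$. Substituting yields, for each $m$, $\PP\!\left(\big|\rCVC[\alg^{(m)},V_{1:K}]-\risk[\alg^{(m)}([n])]\big|\ge t+\frac{6M}{n}\right)\le 6\exp(-nt^2/(C_1(M+L)^2))$ with the same $C_1$ throughout. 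A union bound over the finite set $\mathcal{M}$ and inversion then finish the proof: taking $t$ such that $6\abs{\mathcal{M}}\exp(-nt^2/(C_1(M+L)^2))=6\delta$, i.e. $t=(M+L)\sqrt{C_1\log(\abs{\mathcal{M}}/\delta)/n}$, gives with probability at least $1-6\delta$ that $\sup_{m\in\mathcal{M}}\big|\rCVC[\alg^{(m)},V_{1:K}]-\TR[\alg^{(m)}([n])]\big|\le \frac{6M}{n}+C_2(M+L)\sqrt{\log(\abs{\mathcal{M}}/\delta)/n}$, which is the announced inequality up to the precise numerical constant $C_2$ (it can be reduced by treating the deviations $\DevAll,\DevCV,\DevT$ with unequal weights rather than splitting $t$ into three equal parts). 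The finiteness of $\mathcal{M}$ is used only at this union-bound step.

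There is no deep obstacle here --- the substantive work lies in Theorem~\ref{theo:CVC-UB-det} --- and the one point that genuinely matters is that the per-model estimate be exponential in the deviation rather than a variance/Chebyshev bound, so that covering all of $\mathcal{M}$ costs only the $\sqrt{\log\abs{\mathcal{M}}}$ inflation appearing in the statement. A minor bookkeeping subtlety is that in \eqref{eq:CVC-UB-expon-form} the term $\DevAll$ concentrates with $\beta_{m,n}n$ in the denominator while $\DevCV$ and $\DevT$ involve $\beta_{m,n_T}n_T$; under $\beta_{m,t}\le M/t$ both are $\le M$, so a single uniform sub-Gaussian bound governs all three contributions simultaneously.
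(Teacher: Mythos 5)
Your proof is correct and follows essentially the same route as the paper: apply the exponential tail bound \eqref{eq:CVC-UB-expon-form} from the proof of Theorem~\ref{theo:CVC-UB-det} to each $\alg^{(m)}$, bound the additive shift $2(\beta_{m,n}+\beta_{m,n_T})$ by $6M/n$ using $n_T=\frac{K-1}{K}n$ and $(2K-1)/(K-1)\le 3$, take a union bound over the finite set $\mathcal{M}$, and invert. The only difference is that you leave the final numerical constant unspecified, whereas the paper pins it down (somewhat loosely) as $4(M+L)$; this is immaterial to the argument.
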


\begin{proof}
	Using Fact \ref{fact:mask-property} for the \Kfold\:scheme ($n_T=n (K-1)/ K $ with $K\geq 2$), and the fact that $\beta_{m,n}  + \beta_{m,n_T} \leq (M/n)  ( ( 2K - 1) / (K-1)) $ as well as $(2K-1)/(K-1) \leq 3$, one gets
	$$ \forall m\in \mathcal{M}\:,\:	\PP\left(\big|\ER^{corr}_{\Kfold}\left[\alg^{(m)},V_{1:K}\right]-\TR\big[\alg^{(m)}([n])\big]\big|  \geq t + (6M/n) \right)
	\leq 6\exp{\left(\ffrac{-2nt^2}{9(4M+L)^2} \right)},$$
	which gives by a union bound
	$$ 	\PP\left(\sup_{m\in \mathcal{M}}\big|\ER^{corr}_{\Kfold}\left[\alg^{(m)},V_{1:K}\right]-\TR\big[\alg^{(m)}([n])\big]\big|  \geq t+  (6M/n)\right)
	\leq 6\abs{\mathcal{M}}\exp{\left(\ffrac{-2nt^2}{9(4M+L)^2} \right)}.$$
	Thus, by inverting, we obtain the desired result.
\end{proof}
\paragraph{Proof of Theorem \ref{theo:model-selec-CVC}} First, using the definition of $\hat m $ (eq. \ref{def:opt-Kfold}), write
$$\ER^{corr}_{\mathrm{Kfold}}\left[ \alg^{(\hat m)},V_{1:K}\right]-\ER^{corr}_{\mathrm{Kfold}}\left[ \alg^{(m^*)},V_{1:K}\right] \leq 0.$$
It follows that
\begin{align}\label{ineq:key-ineq-mod-selec}
	\risk\big[\alg^{(\hat m)}([n])\big]-\TR\big[\alg^{(m^*)}([n])\big]&\leq   
	\risk\big[\alg^{(\hat m)}([n])\big]-\ER^{corr}_{\mathrm{Kfold}}\left[ \alg^{(\hat m)},V_{1:K}\right]
	+\ER^{corr}_{\mathrm{Kfold}}\left[ \alg^{(m^*)},V_{1:K}\right] -\TR\big[\alg^{(m^*)}([n])\big]\\
	&\leq 2\sup_{m\in \mathcal{M}}\bigg|\ER^{corr}_{\Kfold}\left[\alg^{(m)},V_{1:K}\right]-\TR\big[\alg^{(m)}([n])\big]\bigg|.\nonumber
\end{align}
It remains to use proposition \ref{prop:uniform-CVC-UB} and the proof is complete. 

\section{UNIFORM STABILITY FOR RANDOMIZED ALGORITHMS}\label{sec:randomized}
In this section we generalize the results from he main paper to the case of randomized algorithms. 
Let us  start with reminding the concept of uniform stability for randomized learning algorithms introduced in \citet{elisseeff05a}. 
\begin{definition}
	An algorithm $\alg$ is said to be   $(\beta_t)_{1\leq t\leq n}$-\emph{uniform} stable with respect to a loss function $\loss$ if, for any $\DD\in\mathcal{Z}^n$, $T\subset [n]$, $i\in T$ and $o\in \mathcal{Z}$, the following holds
	\begin{equation}
	\left|\EE_{\alg}\left[\loss\left(\alg(T),o\right)\right]-\EE_{\alg}\left[
	\loss\left(\alg(T^{\backslash i}),o\right)\right]\right|  \leq \beta_{{n_T}}. 
	\end{equation}
	Where the randomness in the latter expectation stems from the algorithm $\alg$ while the observation $o$ and the data sequence $\DD$ are fixed. Equivalently,
		\begin{equation}
\left|\EE\left[\loss\left(\alg(T),O\right)-
	\loss\left(\alg(T^{\backslash i}),O\right)\bigg\lvert \DD_T,O\right]\right| \leq \beta_{{n_T}}. 
	\end{equation}
\end{definition}

Note that a similar version of Fact \ref{fact:Bias-UB} still holds, more precisely, one has

\begin{fact}\label{Bias-UB-random}
	Let $\alg$ be a decision rule which is $(\beta_t)_{1\leq t  \leq n}$ uniformly stable, additionally suppose that the sequence $(\beta_t)_{1\leq t  \leq n}$ is decreasing, then for any $T\subset [n]$,  and $o\in \mathcal{Z}$, one has
	\[\left| \EE\left[\loss\big(\alg([n]),o\big)-
		\loss\big(\alg(T),o\big)\mid \DD\right]\right| \leq \sum_{i=n_T}^{n}\beta_{i} . 
	\]
\end{fact}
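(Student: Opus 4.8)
The plan is to reproduce, \emph{mutatis mutandis}, the telescoping argument behind the deterministic Fact~\ref{fact:Bias-UB}, the only genuinely new ingredient being the correct handling of the internal randomness of $\alg$. First I would fix the test point $o\in\mathcal{Z}$ and the data sequence $\DD$, enumerate the indices of $[n]\setminus T$ in an arbitrary order $j_{n_T+1},\dots,j_n$, and introduce the increasing chain of index sets
\[
T = T_{n_T}\subset T_{n_T+1}\subset\dots\subset T_n = [n],\qquad T_k = T_{k-1}\cup\{j_k\},
\]
so that $\mathrm{card}(T_k)=k$ and $T_{k-1}=T_k^{\backslash j_k}$ for every $k$. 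Summing the telescoping identity
\[
\loss\big(\alg([n]),o\big) - \loss\big(\alg(T),o\big) = \sum_{k=n_T+1}^{n}\Big(\loss\big(\alg(T_k),o\big) - \loss\big(\alg(T_k^{\backslash j_k}),o\big)\Big),
\]
then taking the conditional expectation given $\DD$ and using the triangle inequality reduces the claim to bounding, for each $k$, the increment $\big|\EE\left[\loss(\alg(T_k),o) - \loss(\alg(T_k^{\backslash j_k}),o)\mid\DD\right]\big|$ by $\beta_k$.

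The key observation for that step is that conditioning on the whole sequence $\DD$ fixes in particular its subsequence $\DD_{T_k}$, so that $\EE\left[\loss(\alg(T_k),o)\mid\DD\right]$ depends on $\DD$ only through $\DD_{T_k}$ and coincides with the pure-algorithm expectation $\EE_{\alg}\left[\loss(\alg(T_k),o)\right]$ taken with $\DD_{T_k}$ and $o$ held fixed; the same applies to $\alg(T_k^{\backslash j_k})$, which is trained on $\DD_{T_k}\setminus\{O_{j_k}\}$. At that point the randomized uniform stability property recalled at the beginning of this section applies verbatim with the roles $T = T_k$, $i = j_k$, and bounds the $k$-th increment by $\beta_{\mathrm{card}(T_k)} = \beta_k$. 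Summing over $k = n_T+1,\dots,n$ yields $\sum_{k=n_T+1}^{n}\beta_k \le \sum_{i=n_T}^{n}\beta_i$, which is the asserted inequality; the monotonicity of $(\beta_t)$ plays no role in the chaining itself and is only invoked downstream to further estimate this right-hand side.

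Since the combinatorial skeleton is an entirely routine chaining, I do not expect a real obstacle here; the single point that deserves to be written out with care is precisely the measure-theoretic bookkeeping of the second paragraph --- namely that $\EE\left[\,\cdot\mid\DD\right]$ applied to a predictor trained on the sub-sample $\DD_{T_k}$ collapses to the expectation over the algorithmic randomness alone that figures in the randomized-stability definition, rather than to one that still averages over the coordinates of $\DD$ outside $T_k$. Once this identification is granted, the triangle inequality together with the increment-wise stability bounds closes the argument.
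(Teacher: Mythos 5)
Your proof is correct and is precisely the telescoping argument the paper implicitly relies on: the paper states this Fact without proof as the randomized analogue of Fact~\ref{fact:Bias-UB}, and your chain $T=T_{n_T}\subset\dots\subset T_n=[n]$ combined with the observation that conditioning on $\DD$ reduces each increment to the expectation over the algorithmic randomness alone (so that the randomized stability definition applies with $T=T_k$, $i=j_k$) is exactly the intended argument. Your two side remarks are also accurate: the chaining actually yields the slightly sharper bound $\sum_{i=n_T+1}^{n}\beta_i$, which dominates the stated right-hand side only because $\beta_{n_T}\ge 0$, and the monotonicity hypothesis on $(\beta_t)$ is not needed for this step.
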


\subsection{Upper Bounds for \textrm{K-fold} \CV under Random Uniform Stability }
We now  derive upper bounds -\emph{in expectation}- on the error induced by $\rCV$ and $\rCVC$. As highlighted in Remark \ref{remark:bias-variance-stability}, the main problem with \Kfold\ \CV is it's bias. Therefore, for the sake of brevity, we will focus only on the expectation of the estimate .  
\begin{theorem}\label{theo:randomized}
	Suppose that $\alg$   is $(\beta_t)_{1\leq t  \leq n}$ uniformly stable. Then  we have
	\[ \left|	\EE\left[\rCV\left[\alg,V_{1:K}\right]-\TR\big[\alg([n])\big]\right]\right| \leq  \sum_{i=n_T}^{n}\beta_{i} . \]
\end{theorem}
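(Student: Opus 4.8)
The plan is to reduce the statement to the bias part of the proof of Theorem~\ref{theo:CV-stable-bound}, the key simplification being that, once we only control the expectation, the deviation term disappears and no concentration inequality is needed. Concretely, I would start from the decomposition \eqref{ineq:error-decomp}, namely
\[
\rCV\left[\alg,V_{1:K}\right]-\TR\big[\alg([n])\big] = \DevCV + \BiasCV,
\]
with $\DevCV = \rCV\left[\alg,V_{1:K}\right]-\risk_{CV}\left[\alg,V_{1:K}\right]$ and $\BiasCV = \risk_{CV}\left[\alg,V_{1:K}\right]-\TR\big[\alg([n])\big]$, where $\risk_{CV}$ is the average true risk of the $\alg(T_j)$'s introduced in \eqref{eq:true-rcv-def} (now understood with the risks $\risk[\alg(T)]$ additionally averaged over the algorithm's internal randomness, so that they remain functions of $\DD_T$ only).

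First I would show $\EE[\DevCV]=0$. Fix a fold $j$ and an index $i\in V_j$; then $O_i$ is independent both of $\DD_{T_j}$ and of the internal coins used by $\alg$ to produce $\alg(T_j)$, so by the tower rule $\EE\bigl[\loss(\alg(T_j),O_i)\bigr]=\EE\bigl[\risk[\alg(T_j)]\bigr]$. Averaging over $i\in V_j$ and over $j\in\{1,\dots,K\}$ gives $\EE\bigl[\rCV[\alg,V_{1:K}]\bigr]=\EE\bigl[\risk_{CV}[\alg,V_{1:K}]\bigr]$, i.e. $\EE[\DevCV]=0$. Hence $\EE\bigl[\rCV[\alg,V_{1:K}]-\TR[\alg([n])]\bigr]=\EE[\BiasCV]$, and therefore the quantity to be bounded satisfies $\big|\EE[\BiasCV]\big|\le \EE\bigl[|\BiasCV|\bigr]$.

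Second, I would obtain a pathwise bound on $|\BiasCV|$. Since $K$ divides $n$, all training sets share the common cardinality $n_T$ (Fact~\ref{fact:mask-property}), so $\BiasCV=\frac{1}{K}\sum_{j=1}^K\bigl(\risk[\alg(T_j)]-\risk[\alg([n])]\bigr)$. For each $j$, using $\DD_{T_j}\subseteq\DD$ I would write $\risk[\alg(T_j)]-\risk[\alg([n])]=\int\bigl(\EE_{\alg}[\loss(\alg(T_j),o)\mid\DD]-\EE_{\alg}[\loss(\alg([n]),o)\mid\DD]\bigr)\,\mathrm{d}P(o)$ and apply the randomized bias bound (Fact~\ref{Bias-UB-random}) to each integrand, which yields $\big|\risk[\alg(T_j)]-\risk[\alg([n])]\big|\le \sum_{i=n_T}^n\beta_i$ almost surely; averaging over $j$ gives $|\BiasCV|\le \sum_{i=n_T}^n\beta_i$ almost surely, and taking expectations concludes.

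The step I expect to be the main obstacle — although it is conceptual rather than technical — is the independence bookkeeping underlying $\EE[\DevCV]=0$: for a randomized learner one must check that the randomness producing $\alg(T_j)$ is independent of the validation sample $\DD_{V_j}$, and that $\risk[\alg(T_j)]$, being defined through a conditional expectation given $\DD_{T_j}$, may be conditioned on the full dataset $\DD$ without change. This is precisely why uniform stability is restated for randomized algorithms in its conditional‑expectation form at the start of this section, and why Fact~\ref{Bias-UB-random} is the appropriate tool in the last step. Everything else is routine, and in particular no McDiarmid-type argument is required since only the mean is controlled.
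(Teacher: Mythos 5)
Your proposal is correct and follows essentially the same route as the paper's proof: the tower rule (your $\EE[\DevCV]=0$ step, which the paper carries out as a single chain of equalities in \eqref{eq:CV-expectation-randomized}) followed by an application of Fact~\ref{Bias-UB-random} to the difference of risks. The only difference is organizational — you make the decomposition \eqref{ineq:error-decomp} explicit and bound $|\BiasCV|$ pathwise before taking expectations, whereas the paper keeps everything inside one expectation — but the mathematical content is identical.
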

\begin{proof}
	First, by applying the tower rule one has 
	\begin{align}\label{eq:CV-expectation-randomized}
		\EE\left[\rCV\left[\alg,V_{1:K}\right]-\TR\big[\alg([n])\big]\right]&=\EE\left[ \frac{1}{ K} \sum_{j= 1}^{K}\ER\big[\alg(T_j) , V_j\big]-\TR\left[\alg([n])\right]\ \right] \nonumber\\
		&=\frac{1}{ K} \sum_{j= 1}^{K}\EE\left[ \EE\left[\ER\big[\alg(T_j) , V_j\big]\mid \DD_{T_j}\right]-\TR\big[\alg([n])\big]\ \right] \nonumber\\
		&=\frac{1}{ K} \sum_{j= 1}^{K}\EE\left[ \EE\left[\loss(\alg(T_j,O)\mid \DD_{T_j}\right]-\TR\big[\alg([n])\big]\ \right]\nonumber\\
		&=\frac{1}{ K} \sum_{j= 1}^{K}\EE\left[ \EE\left[\loss(\alg(T_j,O)-\loss(\alg([n],O))\mid \DD \right] \right].
	\end{align}		
	The third line follows from the fact that $\loss(\alg(T),O_j)$ and $\loss(\alg(T),O)$ has the same law for all $j \in V$. This indeed verified since all the training sets $T_j$'s has the same length and the $O_j$'s are independent from $\DD_T$. To obtain the desired result, it remains to combine Equation \ref{eq:CV-expectation-randomized} with fact \ref{Bias-UB-random}.
\end{proof}

Now let us  prove that the bias corrected \Kfold\:(cf. Eq \eqref{def:risk-CV-corr}) has a vanishing bias for randomized algorithms.
\begin{theorem}[Corrected \Kfold\;bias]
	Suppose that $\alg$   is $(\beta_t)_{1\leq t  \leq n}$ uniformly stable and $\beta_t\leq \ffrac{\lambda}{t}$, for some $\lambda>0$. Then, we have
	\[ \left|	\EE\left[\ER_{\mathrm{Kfold}}^{corr}\left[\alg,V_{1:K}\right]-\TR\big[\alg([n])\big]\right]\right| \leq \ffrac{2\lambda(2K-1)}{(K-1)n}. \]
\end{theorem}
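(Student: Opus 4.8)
The plan is to avoid re-introducing the \Kfold\ bias altogether, by starting from the algebraic rewriting of the corrected estimator already exploited in the proof sketch of Theorem~\ref{theo:CVC-UB-det}. Splitting each full-sample empirical risk as $\ER[\alg(T_j),[n]] = \frac{n_T}{n}\ER[\alg(T_j),T_j] + \frac{n_V}{n}\ER[\alg(T_j),V_j]$ and plugging this into \eqref{def:risk-CV-corr} gives, for any algorithm (randomized or not),
\[
\ER^{corr}_{\mathrm{Kfold}}\left[\alg,V_{1:K}\right] = \ER\left[\alg([n]),[n]\right] + \frac{n_T}{nK}\sum_{j=1}^{K}\Big(\ER\left[\alg(T_j),V_j\right] - \ER\left[\alg(T_j),T_j\right]\Big).
\]
Subtracting $\risk[\alg([n])]$ and taking expectation, the bias splits into the full-sample optimism $\EE\big[\ER[\alg([n]),[n]] - \risk[\alg([n])]\big]$ and $\frac{n_T}{n}$ times the average over $j$ of $\EE\big[\ER[\alg(T_j),V_j] - \ER[\alg(T_j),T_j]\big]$.

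Next I would control each of the two pieces. For the $j$-th summand: since $V_j$ is disjoint from $T_j$ and the observations are \iid, conditioning on $\DD_{T_j}$ (and integrating over the algorithm's internal randomness) gives $\EE\big[\ER[\alg(T_j),V_j]\big] = \EE\big[\risk[\alg(T_j)]\big]$, hence $\EE\big[\ER[\alg(T_j),V_j] - \ER[\alg(T_j),T_j]\big] = \EE\big[\risk[\alg(T_j)] - \ER[\alg(T_j),T_j]\big]$, the expected optimism at training size $n_T$. This I would bound via the randomized analogue of Lemma~7 of \citet{bousquet2002stability} (see also \citet{elisseeff05a}): it equals $\EE\big[\loss(\alg(T_j),O') - \loss(\alg(T_j^{l}),O')\big]$ for an independent copy $O'$, where $\alg(T_j^{l})$ is $\alg$ trained on $\DD_{T_j}$ with $O_l$ replaced by $O'$; passing from $\alg(T_j)$ to $\alg(T_j^{l})$ by first deleting $O_l$ and then re-inserting $O'$, and applying the random uniform stability assumption twice, bounds its modulus by $2\beta_{n_T}$. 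The identical argument with $T=[n]$ bounds the full-sample optimism by $2\beta_n$. Combining, and using $n_T/n \le 1$,
\[
\Big|\EE\big[\ER^{corr}_{\mathrm{Kfold}}\left[\alg,V_{1:K}\right] - \risk[\alg([n])]\big]\Big| \le 2\beta_n + 2\beta_{n_T}.
\]

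Finally I would substitute the hypothesis $\beta_t \le \lambda/t$ and the \Kfold\ relation $n_T = (K-1)n/K$, which gives $2\beta_n + 2\beta_{n_T} \le \frac{2\lambda}{n} + \frac{2\lambda}{n_T} = 2\lambda\Big(\frac{1}{n} + \frac{K}{(K-1)n}\Big) = \frac{2\lambda(2K-1)}{(K-1)n}$, the announced bound. I do not anticipate a real difficulty; the only point needing care is verifying that the relabelling identity of Bousquet--Elisseeff and the two-step ``delete-then-re-add'' stability estimate carry over to randomized algorithms, which is ensured by working throughout with conditional expectations over the internal randomness of $\alg$, exactly as in the definition of random uniform stability recalled at the beginning of this section.
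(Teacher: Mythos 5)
Your proposal is correct and follows essentially the same route as the paper: the rewriting you start from is exactly the decomposition \eqref{ineq:error-decom-cvc} into $\DevAll+\frac{n_T}{n}(\DevCV-\DevT)$, your observation that $\EE\big[\ER[\alg(T_j),V_j]\big]=\EE\big[\risk[\alg(T_j)]\big]$ is the paper's $\EE[\DevCV]=0$, and the two optimism terms are bounded by $2\beta_n$ and $2\beta_{n_T}$ via the same Bousquet--Elisseeff relabelling identity (the paper invokes the equivalent statement from Hardt et al.). The final substitution $\beta_t\le\lambda/t$ with $n_T=(K-1)n/K$ matches the paper's computation exactly.
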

\begin{proof}
	Combining the error decomposition \eqref{ineq:error-decom-cvc} with the fact that  $n_T=\frac{K-1}{K}n$ we obtain 
	\begin{equation}\label{eq:CVC-decomp-Kfold}
	\ER_{\mathrm{Kfold}}^{corr}\left[\alg,V_{1:K}\right]-\risk\left[\alg[n]\right] = \DevAll+ \ffrac{K-1}{K}(\DevCV-\DevT).
	\end{equation}
	
	Using  Theorem 2.2 in \citet{hardt16}, one obtains the twin inequality
	\[ \left|\EE\left[\DevAll\right]\right| \leq 2\beta_n \]
	\[ \left|\EE\left[\DevT\right]\right| \leq 2\beta_{n_T}.  \]	
	Since $\EE\left[\DevCV\right]=0$, Equation \ref{eq:CVC-decomp-Kfold} combined with the triangular inequality gives
	\begin{align}\label{ineq:random-Kfold}
		\left|	\EE\left[\ER_{\mathrm{Kfold}}^{corr}\left[\alg,V_{1:K}\right]-\TR\big[\alg([n])\big]\right]\right| &\leq 2\left(\beta_n+\ffrac{(K-1)\beta_{n_T}}{K}\right) \nonumber\\
		&\leq 2\left(\beta_n+\beta_{n_T}\right).
	\end{align}
	It remains to use the assumption $\beta_t\leq \ffrac{\lambda}{t}$ and the proof is complete.	
\end{proof}
We conclude this section by deriving an upper bound for the model selection problem,
\begin{theorem}	Let $(\alg^{(m)})_{m\in \mathcal{M}}$ be a family of algorithms where each learner $\alg^{(m)}$ is $(\beta_{m,t})_{1\leq i \leq n}$ uniform stable with respect to loss function $\loss$. Additionally, assume that
	$$ \beta_{m,t} \leq \ffrac{M}{t} $$
	for some $M>0$.  Then one has 
	\[\EE\left[\risk\big[\alg^{(\hat m)}([n])\big]-\TR\big[\alg^{(m^*)}([n])\big]\right] \leq \ffrac{4M(2K-1)}{(K-1)n}, \]
	where $m^*$ and $\hat m $ are defined by Equations \ref{def:oracle}, \ref{def:opt-Kfold} respectively.
\end{theorem}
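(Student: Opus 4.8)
The plan is to follow the proof of Proposition~\ref{theo:model-selec-CVC}, but to argue in expectation rather than in probability: the high-probability uniform control supplied by Proposition~\ref{prop:uniform-CVC-UB} is replaced by the expectation bound on the bias of the corrected \Kfold\ established earlier in this section (the theorem on the corrected \Kfold\ bias), specialised to stability constant $\lambda=M$.

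I would first invoke the optimality of $\hat m$ in~\eqref{def:opt-Kfold}, which gives $\ER^{corr}_{\mathrm{Kfold}}[\alg^{(\hat m)},V_{1:K}]\le \ER^{corr}_{\mathrm{Kfold}}[\alg^{(m^*)},V_{1:K}]$, and then split the excess risk as
\begin{align*}
\risk\big[\alg^{(\hat m)}([n])\big]-\TR\big[\alg^{(m^*)}([n])\big]
&=\Big(\risk\big[\alg^{(\hat m)}([n])\big]-\ER^{corr}_{\mathrm{Kfold}}\big[\alg^{(\hat m)},V_{1:K}\big]\Big)\\
&\quad+\Big(\ER^{corr}_{\mathrm{Kfold}}\big[\alg^{(\hat m)},V_{1:K}\big]-\ER^{corr}_{\mathrm{Kfold}}\big[\alg^{(m^*)},V_{1:K}\big]\Big)\\
&\quad+\Big(\ER^{corr}_{\mathrm{Kfold}}\big[\alg^{(m^*)},V_{1:K}\big]-\TR\big[\alg^{(m^*)}([n])\big]\Big),
\end{align*}
where the middle bracket is nonpositive and is dropped. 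Taking expectations, the excess risk is then at most $\EE\big[\ER^{corr}_{\mathrm{Kfold}}[\alg^{(m^*)},V_{1:K}]-\TR[\alg^{(m^*)}([n])]\big]-\EE\big[\ER^{corr}_{\mathrm{Kfold}}[\alg^{(\hat m)},V_{1:K}]-\risk[\alg^{(\hat m)}([n])]\big]$, and since $\beta_{m,t}\le M/t$ holds for every $m\in\mathcal M$, applying the corrected-\Kfold\ bias bound (with $\lambda=M$) to each of these two expectations bounds each of them by $\ffrac{2M(2K-1)}{(K-1)n}$; summing yields the claimed $\ffrac{4M(2K-1)}{(K-1)n}$.

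The delicate point — and what I expect to be the main obstacle — is that $\hat m$ and $m^*$ are functions of the data $\DD$, so the fixed-algorithm bias bound does not literally apply to $\alg^{(\hat m)}$ or $\alg^{(m^*)}$. To make the step rigorous one returns to the error decomposition~\eqref{ineq:error-decom-cvc}: the contributions coming from $\DevAll$ and $\DevT$ are controlled by randomised uniform stability (through Theorem~2.2 of \citet{hardt16}, already used in this section) by $2\beta_{m,n}$ and $2\beta_{m,n_T}$, and these bounds are pointwise in $m$, hence harmless for a data-dependent index; the genuinely subtle term is $\DevCV$, whose expectation vanishes for each fixed $m$ but not automatically for $\hat m$ or $m^*$. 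It is precisely the vanishing of this mean-zero term that allows the final bound to scale as $M/n$, with neither a $\log|\mathcal M|$ nor a $1/\sqrt n$ term, in contrast with the high-probability Proposition~\ref{theo:model-selec-CVC}. For a finite family one could alternatively combine the concentration Lemmas~\ref{lemma:Dev-CV-UB} and~\ref{lemma:Dev-train-UB} with a union bound over $\mathcal M$ and integrate the tail, but that would reintroduce exactly the complexity term one wants to avoid here.
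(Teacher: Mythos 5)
Your argument is essentially the paper's own proof: the same three-way split using the optimality of $\hat m$ in \eqref{def:opt-Kfold}, dropping the nonpositive middle bracket, and bounding each remaining expectation by the per-model corrected-\Kfold\ bias bound $2(\beta_{m,n}+\beta_{m,n_T})\leq 2M(2K-1)/((K-1)n)$, summed to give the constant $4M(2K-1)/((K-1)n)$. The ``delicate point'' you flag is real, and the paper's proof does not resolve it either: it bounds $\EE\big[\risk[\alg^{(\hat m)}([n])]-\ER^{corr}_{\mathrm{Kfold}}[\alg^{(\hat m)},V_{1:K}]\big]$ by $\sup_{m}\big|\EE[\cdots^{(m)}]\big|$, even though $\hat m$ (and indeed $m^*$) is data-dependent, so the fixed-$m$ expectation bounds — including the exchangeability argument behind $|\EE[\DevAll]|\le 2\beta_n$ and $|\EE[\DevT]|\le 2\beta_{n_T}$, not only the mean-zero property of $\DevCV$ — do not literally transfer; a rigorous version needs $\EE[\sup_m|\cdot|]$, which for finite $\mathcal M$ reintroduces the $\sqrt{\log|\mathcal M|/n}$ term exactly as you observe.
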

\begin{proof}
	First, using Equation \ref{ineq:random-Kfold}, one obtains, for all $m\in \mathcal{M}$,
	\begin{align*}
		\left|\EE\left[\ER_{\mathrm{Kfold}}^{corr}\big[\alg^{(m)},V_{1:K}\big]-\TR\big[\alg^{(m)}([n])\big]\right]\right| &\leq 2\left(\beta_{m,n}+\beta_{m,n_T}\right)\\
		&\leq \ffrac{2M(2K-1)}{(K-1)n}.
	\end{align*}
	So that
	\begin{equation}\label{ineq:exp-model-selec-UB}
	\sup_{m\in\mathcal{M}}\left|\EE\left[\ER_{\mathrm{Kfold}}^{corr}\left[\alg^{(m)},V_{1:K}\right]-\TR\big[\alg^{(m)}([n])\big]\right]\right|\leq \ffrac{2M(2K-1)}{(K-1)n}. 
	\end{equation}
	On the other hand, Inequality \ref{ineq:key-ineq-mod-selec} yields
	\begin{align*}
		\EE\left[\risk\big[\alg^{(\hat m)}([n])\big]-\TR\big[\alg^{(m^*)}([n])\big] \right]&\leq  
		\EE\left[\risk\big[\algdd^{(\hat m)}([n])\big]-\ER^{corr}_{\mathrm{Kfold}}\big[ \alg^{(\hat m)},V_{1:K}\big]\right]\\&\hspace{8mm}
		+\EE\left[\ER^{corr}_{\mathrm{Kfold}}\big[ \alg^{(m^*)},V_{1:K}\big] -\TR\big[\alg^{(m^*)}([n])\big]\right]\\
		&\leq 2\sup_{m\in\mathcal{M}}\left|\EE\left[\ER_{\mathrm{Kfold}}^{corr}\big[\alg^{(m)},V_{1:K}\big]-\TR\big[\alg^{(m)}([n])\big]\right]\right|.
	\end{align*}
	Thus, by Inequality \ref{ineq:exp-model-selec-UB}, one has
	$$	\EE\left[\risk\big[\alg^{(\hat m)}([n])\big]-\TR\big[\alg^{(m^*)}([n])\big] \right] \leq \ffrac{4M(2K-1)}{(K-1)n},$$
	which concludes the proof.
\end{proof}
\subsection{Proof of Theorem \ref{theo:K-fold-LB-SGD}}\label{sec:proof-LB-SGD}
Following the line of \cite{zhang2022stability}, consider the following convex function  ,

$$ f(w,o)=\ffrac{1}{2}w^\top A w - yx^\top w, $$

where  $A$ is  a positive semi definite penalization matrix (PSD) in $\rset^{d\times d}$ with rank $p<d$, $w\in \rset^d$ and $x,y\in \rset^d\times\rset$ . Such a rank deficient penalization matrix can be found in multiple contexts like fused lasso \citep{tibshirani2011solution}, fused ridge \citep{bilgrau2020targeted}, etc.\\
In the next lemma, by picking carefully the input space $\mathcal{X}\times\mathcal{Y}$ and the distribution P, we construct an example where we control exactly the amount of \emph{instability} of SGD .
Theorem \ref{theo:K-fold-LB-SGD} follows directly from the following proposition.
\begin{proposition}
	Let $M>1$ , $n\in \llbracket 1,e^M \rrbracket $ . Suppose that $O=(X,Y)\in\mathcal{X}\times\mathcal{Y}=\left\{v,-v\right\}\times\left\{1\right\}$  and $P(X= v)=\frac{2}{3}$ where $v$ is a unit vector in $\rset^d$ such as $Av=0$ . For $t\geq 1$, there exist a sequence of step sizes $(\alpha_{k,n})_{1\leq k \leq t}$  , such as the SGD algorithm (Definition \ref{def:SGD}) with  $\loss=f$ and $\alg_0=0$ verifies,
	
	$$\EE\left[\big|\rCV\left[\alg,V_{1:K}\right]-\risk\left[\alg[n]\right] \big|\right]\geq \frac{\log\left(K/K-1\right)}{3}.$$
	
	Furthermore SGD satisfies Assumption \ref{assum:stability-setting}  with respect to $\loss$ with $L=M$ and 
	$$\beta_n \leq \ffrac{3}{n-1}\sum_{k=1}^{t}\alpha_{k,n}\leq \ffrac{3M}{n-1}.$$ 
\end{proposition}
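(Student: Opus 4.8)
The plan is to make the SGD dynamics completely explicit on the constructed one-dimensional-effective problem, so that every quantity entering $\rCV$ and $\risk[\alg([n])]$ reduces to a function of the number of training points, as in the RERM proof. First I would analyze the update rule: since $Av=0$ and every observation satisfies $(X,Y)=(\pm v,1)$, the gradient $\nabla_w f(w,o)=Aw-yx^\top w\cdot(\text{direction})$ collapses — writing $w_t=c_t v$ (which is preserved because $A v = 0$ and $\alg_0=0$), the only surviving term is the linear part in the $v$-direction, and the stochastic gradient at step $k$ becomes $-Y_{i_k} X_{i_k}^\top v \, v = \mp v$ depending on whether the sampled point is $v$ or $-v$. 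Hence $c_{t}$ is an explicit signed sum $\sum_{k=1}^{t}\alpha_{k,n}\varepsilon_k$ where $\varepsilon_k\in\{+1,-1\}$ indicates the sign of the $k$-th sampled point. Because $P(X=v)=2/3$, conditionally on the empirical fraction $\hat p$ of $+v$'s in the training set, the expected increment per step is proportional to $(2\hat p-1)$; choosing the step sizes $\alpha_{k,n}$ so that $\sum_{k\le t}\alpha_{k,n}$ equals the target scale (of order $M$, and calibrated against $n$ via a $\log$ — mirroring the $\log(n)$ scaling of the RERM example) lets me force $\EE[\loss(\alg(T),O)\mid \DD_T]$ to be, up to lower-order terms, a concrete decreasing function $g(n_T)$ with $g(n_T)-g(n)$ of order $\log\!\big(\tfrac{K}{K-1}\big)$.

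Next I would verify Assumption~\ref{assum:stability-setting}. Boundedness $|\loss(\alg(T),O)|\le M$ follows by bounding $|c_t|\le \sum_k\alpha_{k,n}$ and the fact that $|x^\top v|\le 1$, combined with the calibration $\sum_k\alpha_{k,n}\le M$; one checks $|f(c_tv,o)|=|{-}y\,x^\top(c_tv)|\le |c_t|\le M$ since the quadratic term vanishes on the span of $v$. For the stability constant I would invoke the standard SGD stability argument (Theorem 2.2 in \citet{hardt16}, in its randomized form from Section~\ref{sec:randomized}): replacing one training point changes the sampling distribution on a single coordinate, and the loss is $1$-Lipschitz in $c_t$ along $v$, so the per-replacement deviation of $\EE_\alg[\loss]$ is at most (number of steps that could see the swapped index)$\times$(step size)$\times$(Lipschitz constant), which for uniform sampling over $n_T$ points with replacement gives $\beta_{n_T}\le \tfrac{c}{n_T}\sum_{k\le t}\alpha_{k,n}$; tracking the constant on the constructed instance gives the stated $\beta_n\le \tfrac{3}{n-1}\sum_{k\le t}\alpha_{k,n}\le\tfrac{3M}{n-1}$ (the $n-1$ rather than $n$ coming from the leave-one-out/replacement bookkeeping).

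Finally, the lower bound on the CV error follows exactly as in Theorem~\ref{theo:K-fold-LB}: by Jensen, $\EE\big[\,|\rCV[\alg,V_{1:K}]-\risk[\alg([n])]|\,\big]\ge \big|\EE[\rCV[\alg,V_{1:K}]-\risk[\alg([n])]]\big|$, and by the tower rule together with the fact that all folds have training size $n_T$ and validation points independent of the training fold (Fact~\ref{fact:mask-property}), the right-hand side equals $\big|\EE[\risk[\alg(T_1)]]-\EE[\risk[\alg([n])]]\big| = g(n_T)-g(n)$, which the step-size calibration makes at least $\tfrac13\log\!\big(\tfrac{K}{K-1}\big)$. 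The main obstacle I anticipate is the bookkeeping in the stability estimate: one must control how a single point-replacement perturbs the \emph{law} of the SGD iterate under with-replacement sampling over exactly $n_T$ points and show the perturbation does not blow up over $t$ iterations while still being large enough (in the complementary bias direction) to yield the $\log\frac{K}{K-1}$ gap — i.e. simultaneously getting an upper bound on $\beta_n$ and a matching lower bound on the bias from the \emph{same} carefully tuned sequence $(\alpha_{k,n})_{k\le t}$. The choice of $P(X=v)=2/3$ (rather than $1/2$) is presumably exactly what creates the nonzero drift needed to separate $g(n_T)$ from $g(n)$, so I would keep that asymmetry front and center throughout the computation.
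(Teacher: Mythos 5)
Your plan matches the paper's proof essentially step for step: the iterates collapse onto the span of $v$ because $Av=0$ and $\alg_0=0$, so $w_t=\sum_{k\le t}\alpha_{k,n}Y_{i_k}X_{i_k}$ and $f(w_t,o)=-yx^\top w_t$; the drift $\frac{n_+-n_-}{n}$ created by $P(X=v)=2/3$ makes the expected loss an explicit function of the training-set size; the calibration $\alpha_{k,n}=\log(n)/t$ simultaneously gives the boundedness $|f(w_t,o)|\le\log n\le M$ and, via Jensen plus the tower rule over folds of common size $n_T=\frac{K-1}{K}n$, the bias gap $\frac{1}{3}\bigl(\log n-\log n_T\bigr)=\frac{1}{3}\log\frac{K}{K-1}$. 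The only caution concerns the stability step: the paper does not invoke Theorem~2.2 of \cite{hardt16} as a black box but computes $\EE_{\alg}[w_t]=\sum_{k\le t}\alpha_{k,n}\frac{n_+-n_-}{n}v$ exactly and bounds the removal difference $\bigl|\frac{n_+-n_-}{n}\log n-\frac{n_+-n_--1}{n-1}\log(n-1)\bigr|\le\frac{2\log n+1}{n-1}\le\frac{3}{n-1}\sum_{k\le t}\alpha_{k,n}$ directly — this is needed because removing a point changes the step-size schedule from $\alpha_{k,n}$ to $\alpha_{k,n-1}$ (and because $f$ is only Lipschitz along the span of $v$), situations the generic replacement-stability theorem does not cover; your stated fallback of tracking the constant on the constructed instance amounts to exactly this direct computation, so the gap is one of presentation rather than substance.
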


\begin{proof}
	Computing the gradient of $f(\cdot,o)$ for $o=(x,y)$ yields,
	$$\nabla_wf(w,o)=Aw-yx.$$
	Now set $w_t=\alg_t\left(\left[n\right]\right)$ and write using the definition of SGD (Equation \ref{def:SGD}) :
	\begin{equation}\label{eq:update-rule}
	w_{t+1}=\left(I-\alpha_tA\right)w_t+\alpha_{t,n}y X_{it}.
	\end{equation}
	Thus by induction we obtain  $w_t=\theta_t v$ for some $\theta_t \in \rset$. Consequently, Equation \ref{eq:update-rule} yields,	
	\begin{equation*}
	w_{t+1}=w_t+\alpha_{t,n}y X_{it},
	\end{equation*}
	so that,
	\begin{equation}\label{eq:update-rule-complete}
		w_{t}=\sum_{k=1}^{t}\alpha_{k,n}y X_{it}.
	\end{equation}	
	Furthermore SGD picks $X_{it}=v$ with probability $n_+/n$ where $n_+$ (\emph{resp.} $n_-$) is the number of samples such as $X=v$ (\emph{resp.} $-v$), hence
	\begin{equation}\label{eq:SGD-path-exact}
	\EE_\alg\left[w_{t}\right]=\sum_{k=1}^t\alpha_{k,n}\left(\frac{n_+}{n}-\frac{n_-}{n}\right)v.
	\end{equation}
	On the other hand, since $w_t=\theta_tv$ we obtain,
	\begin{equation}\label{eq:SGD-loss-exact}
	f(w_t,o)=-yx^\top w_t.
	\end{equation}
	Set $w^{\prime}_t=\alg\left(\left[n\right]^{\backslash j}\right)$ and consider the case where $j$ is such as $X_j=v$. Note that the other case is similar and thus omitted. Since $\lVert v \rVert =1$, one  has  by Equations \ref{eq:SGD-path-exact} and \ref{eq:SGD-loss-exact}
	$$ \forall o \in \mathcal{Z} \:,\: \left|\EE_\alg\left[f\left(w^{\prime}_{t},o\right)\right]-\EE_{\alg}f\left[\left(w_{t},o\right)\right]\right|=\left|\frac{n_+-n_-}{n}\sum_{k=1}^t\alpha_{k,n}-\frac{n_+-n_- -1}{n-1}\sum_{k=1}^{t}\alpha_{k,n-1}\right|. $$
	Now for $t\geq 0$ , take $\alpha_{k,n}=\ffrac{\log(n)}{t}$ for all $k\leq t$ so that,
	$$ \forall o \in \mathcal{Z} \:,\: \left|\EE_\alg\left[f\left(w^{\prime}_{t},o\right)\right]-\EE_{\alg}f\left[\left(w_{t},o\right)\right]\right|=\left|\frac{n_+-n_-}{n}\log(n)-\frac{n_+-n_- -1}{n-1}\log(n-1)\right|, $$
	which yields by simple algebra
	\begin{align}\label{eq:SGD-stability-ineq}
		\forall o \in \mathcal{Z} \:,\:\left|\EE_\alg\left[f\left(w^{\prime}_{t},o\right)\right]-\EE_{\alg}f\left[\left(w_{t},o\right)\right]\right|&\leq \ffrac{2\log\left(n\right)+1}{n-1}\nonumber\\&\leq \ffrac{3\log(n)}{n-1}\nonumber\\&=\ffrac{3}{n-1}\sum_{k=1}^{t}\alpha_{k,n}\nonumber\\&\leq \ffrac{3M}{n-1}.
	\end{align} 
	Now, using Equations \ref{eq:update-rule-complete} and \ref{eq:SGD-loss-exact} and the expression of $\alpha_{k,n}$ we get,
   $$ \forall o \in \mathcal{Z} \:,\:\lvert f(w_t,o)\rvert=\sum_{k=1}^{t}\alpha_{k,n}=\log(n)\leq M,$$	
	The latter equation combined with Equation \ref{eq:SGD-stability-ineq} confirms that SGD  verifies Assumption \ref{assum:stability-setting}with $L=M$ and 
	$$\beta_n \leq \ffrac{3}{n-1}\sum_{k=1}^{t}\alpha_{k,n}\leq \ffrac{3M}{n-1}.$$  
	For the lower bound, let $T\subset  \left[n\right]$ and set $w_t^{T}=\alg_t\left(T\right)$. Using \ref{eq:SGD-path-exact} yields
	
	$$ 	\EE\left[\EE_\alg\left[w_{t}^T\right]\right]=\sum_{k=1}^t\alpha_{k,n_T}\frac{v}{3}, $$
	so that by \ref{eq:SGD-loss-exact} ,
	\begin{align*}
	\left|\EE\left[\EE_\alg\left[f\left(w_{t},o\right)\right]\right]-\left[\EE_\alg\left[f\left(w^{T}_{t},o\right)\right]\right]\right|&=\frac{\sum_{k=1}^t\alpha_{k,n}-\sum_{k=1}^t\alpha_{k,n_T}}{3}\\
	\left(\alpha_{k,n}=\log(n)/t\right)	&=\frac{\log\left(K/K-1\right)}{3}.
	\end{align*} 	
	It remains to notice that
	\begin{align*}
	\EE\left[\big|\rCV\left[\alg,V_{1:K}\right]-\risk\left[\alg[n]\right] \big|\right] &
	\geq\left|\EE\left[ \rCV\left[\alg,V_{1:K}\right]-\risk\left[\alg[n]\right] \right] \right|\\
	&=\left|\risk\left[\alg\left(\DD_T \right)\right]-\risk\left[\alg\left(\DD\right)\right]\right|,
	\end{align*}
	and the proof is complete.
\end{proof}


\end{document}